\title{Erratum: Limit theorems for Betti numbers of random simplicial complexes}
\author{Matthew Kahle and Elizabeth Meckes}
\date{April 24, 2014}
\newlength{\depthofsumsign}
\newcommand{\nsum}[1][1.4]{
    \mathop{%
        \raisebox
            {-#1\depthofsumsign+1\depthofsumsign}
            {\scalebox
                {#1}
                {$\displaystyle\sum$}%
            }
    }
}
\newtheorem{thm}{Theorem}[section]
\newtheorem{lemma}[thm]{Lemma}
\theoremstyle{definition}
\newcommand{\R}{\mathbb{R}}
\newcommand{\E}{\mathbb{E}}
\newcommand{\N}{\mathbb{N}}
\newcommand{\vol}{\mathop{\mathrm{vol}}}
\newcommand{\ds}{\displaystyle}
\newcommand{\X}{\mathcal{X}}
\newcommand{\Y}{\mathcal{Y}}
\renewcommand{\P}{\mathbb{P}}
\newcommand{\var}{\mathrm{Var}}
\newcommand{\cov}{\mathrm{Cov}}
\renewcommand{\S}{\mathcal{S}}
\newcommand{\p}{\mathcal{P}}
\renewcommand{\1}{\mathbb{1}}
\begin{document}

\maketitle

\begin{abstract}
We correct the proofs of the main theorems in our earlier paper ``Limit theorems for Betti numbers of random simplicial complexes.'' We are grateful to D.\ Yogeshwaran for pointing out the mistakes.
\end{abstract}

\section{The Erd\H{o}s-Renyi random clique complex}
In the paper \cite{KM}, we claimed a central limit theorem for the Betti number of an Erd\H{o}s-Renyi random simplicial complex (Theorem 2.4).  The proof given there contains an error, however, with minor modifications and an additional recent result, the proof goes through essentially as before.  For brevity, we refer to \cite{KM} for notation and descriptions of the models used.  Formally, we have the following modification of Theorem 2.4 of \cite{KM}.
\begin{thm}\label{ER-CLT}
Consider the Erd\H{o}s-Renyi clique complex $X(n,p)$; that is, take a random 1-skeleton on $n$ vertices, in which edges are present independently and with probability $p$, and let $X(n,p)$ be the maximal complex over this 1-skeleton.   Suppose that there is some $\delta > 0$ such that $p = \omega(n^{-1/k + \delta})$ and $p=o( n^{-1/(k+1) - \delta})$.
Then $$ \frac{
  \beta_k (X(n,p)) - \E[\beta_k(X(n,p))] }{ \sqrt{\var [\beta_k]}} \Rightarrow
\mathcal{N}(0,1).$$

\end{thm}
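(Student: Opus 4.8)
The plan is to compare $\beta_k$ with the number $f_k$ of $k$-dimensional faces of $X(n,p)$, i.e.\ the number of $(k+1)$-cliques in the underlying graph. Taking homology over a fixed field and writing $\partial_j$ for the $j$-th simplicial boundary map, rank--nullity gives the exact identity
\[
\beta_k = f_k - \mathrm{rank}\,\partial_k - \mathrm{rank}\,\partial_{k+1}.
\]
Since $f_k$ is an ordinary subgraph count, it satisfies a central limit theorem throughout this range of $p$ by the classical asymptotic normality of clique counts in the Erd\H{o}s--Renyi graph, so the whole argument reduces to showing that the two correction ranks are negligible \emph{after centering}.

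The sizes of $\mathrm{rank}\,\partial_k$ and $\mathrm{rank}\,\partial_{k+1}$ are pinned down by the vanishing of homology away from dimension $k$, and this is where the additional recent input enters. The lower bound $p=\omega(n^{-1/k+\delta})$ places $X(n,p)$ above the sharp threshold for the vanishing of all reduced homology in dimensions $\le k-1$, while the upper bound $p=o(n^{-1/(k+1)-\delta})$ places it below the threshold at which homology in dimensions $\ge k+1$ first appears; by the sharp cohomology-vanishing theorem for random flag complexes there is then an event $G$ with $\P(G)=1-o(1)$ on which $\widetilde H_j(X(n,p))=0$ for every $j\neq k$. On $G$, exactness telescopes the two ranks into alternating sums of face numbers,
\[
\mathrm{rank}\,\partial_k = f_{k-1}-f_{k-2}+\cdots, \qquad \mathrm{rank}\,\partial_{k+1}=f_{k+1}-f_{k+2}+\cdots,
\]
so that on $G$ we have $\beta_k-\E\beta_k=(f_k-\E f_k)+E$, with $E$ a signed sum of the \emph{centered} lower- and higher-dimensional face counts.

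Next I would establish $\var E=o(\var f_k)$. The two-sided restriction on $p$ is precisely the range in which $f_k$ carries the dominant variance: the lower bound yields $\var f_{k-1}/\var f_k\asymp (np^k)^{-2}=o(1)$ (lower dimensions smaller still), and the upper bound forces $\var f_{k+1}=o(\var f_k)$ (higher dimensions smaller still). Hence $\var E=o(\var f_k)$, and by Cauchy--Schwarz $\cov(f_k,E)=o(\var f_k)$, so that $\var\beta_k=\var f_k\,(1+o(1))$ and the two normalizations agree asymptotically. Together with $E/\sqrt{\var f_k}\to 0$ in probability (Chebyshev for the centered face counts), the fact that $\beta_k$ coincides with the telescoped expression off the event $G^{c}$, and the quantitative decay of $\P(G^{c})$ afforded by the $\delta$-margins --- which keeps both that probability and the resulting shift in the means below the scale $\sqrt{\var f_k}$ --- Slutsky's theorem upgrades the CLT for $f_k$ to the claimed CLT for $\beta_k$.

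The crux, and the source of the gap in the original argument, is the control of $\mathrm{rank}\,\partial_{k+1}$. One cannot bound the difference $f_k-\beta_k=\mathrm{rank}\,\partial_k+\mathrm{rank}\,\partial_{k+1}$ in $L^1$, because in this window $\E\,\mathrm{rank}\,\partial_{k+1}$ is comparable to $\E f_{k+1}$, which is \emph{much larger} than $\sqrt{\var f_k}$; thus $\beta_k$ is \emph{not} close to $f_k$ at the level of means, and the approximation is valid only after subtracting expectations. It is exactly to legitimize this centered comparison --- replacing each rank by an explicit alternating sum of face counts whose \emph{fluctuations}, unlike their means, are genuinely $o(\sqrt{\var f_k})$ --- that the sharp vanishing theorem is required.
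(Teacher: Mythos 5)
Your argument is essentially the paper's: the telescoped expression $f_k-f_{k\pm1}+f_{k\pm2}-\cdots$ obtained from exactness on the vanishing event is exactly the quantity $\tilde{\beta}_k$ that the paper introduces via the Euler relation and the homology-vanishing theorem of \cite{clique2}, and the reduction to a CLT for this alternating sum together with the variance dominance of $f_k$ (with $\var f_{k\pm j}/\var f_k\to0$ forced by the two-sided restriction on $p$) is the same, the only cosmetic difference being that the paper proves the CLT for $\tilde{\beta}_k$ directly by Stein's method for dissociated sums rather than citing the classical clique-count CLT and treating the remainder as a negligible perturbation. The one step you should make fully explicit is the transfer of the centering $\E\tilde{\beta}_k\mapsto\E\beta_k$ and of the normalization $\var\tilde{\beta}_k\mapsto\var\beta_k$: since $|\beta_k-\tilde{\beta}_k|$ can be of order $n^{k+1}$ on the bad event while $\sqrt{\var f_k}\sim n^{k}p^{\binom{k+1}{2}-\frac12}$, the crude bound $\P(G^c)=o(1)$ is insufficient and one needs $\P(G^c)=o(n^{-M})$ for a sufficiently large constant $M$ (the paper takes $M\ge k+3$), which is precisely what the $\delta$-margins and the refined form of the vanishing theorem supply --- you gesture at this quantitative decay, and it is the exact reason the admissible range of $p$ had to be narrowed relative to the original claim.
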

Note that the range of $p$ is slightly restricted relative to what was claimed earlier, when $\delta$ was taken to be 0.

\medskip

The mistake in the proof given in \cite{KM} of this result was the claim that, given the Morse inequalities
\[f_k-f_{k+1}-f_{k-1}\le\beta_k\le f_k, \]
and central limit theorems for the recentered, renormalized upper and lower bounds, a central limit theorem for $\beta_k$ itself followed; this is not true, however, because the difference in means of the upper and lower bounds is too large relative to the normalization to allow such a conclusion.  However, the proof of the central limit theorem for the lower bound is valid, and with minor modifications, one can use the same proof to obtain a central limit theorem for the quantity
\[\tilde{\beta}_k:=f_k-f_{k+1}-f_{k-1}+f_{k+2}+f_{k-2}-f_{k+3}-f_{k-3}+\cdots.\]
A consequence of the results in \cite{clique2} is that for $p$ in the given regime, a.a.s.\ all the Betti numbers except for $\beta_k$ are zero.  It then follows immediately from the Euler formula that $\beta_k=\tilde{\beta}_k$ a.a.s.

A central limit theorem for $\beta_k$ is then essentially immediate from a central limit theorem for $\tilde{\beta}_k$:
\begin{equation*}\begin{split}\P\left[\frac{\beta_k-\E \tilde{\beta}_k}{\sqrt{\var(\tilde{\beta}_k)}}\le t\right]&\le\P\left[\frac{\tilde{\beta}_k-\E \tilde{\beta}_k}{\sqrt{\var(\tilde{\beta}_k)}}\le t,\beta_k=\tilde{\beta_k}\right]+\P[\beta_k\neq\tilde{\beta}_k]\le\P\left[\frac{\tilde{\beta}_k-\E \tilde{\beta}_k}{\sqrt{\var(\tilde{\beta}_k)}}\le t\right]+\P[\beta_k\neq\tilde{\beta}_k];\end{split}\end{equation*}
the corresponding lower bound follows the same way.  Thus
\begin{equation}\label{not-quite}
\left|\P\left[\frac{\beta_k-\E \tilde{\beta}_k}{\sqrt{\var(\tilde{\beta}_k)}}\le t\right]-\Phi(t)\right|\le\left|\P\left[\frac{\tilde{\beta}_k-\E \tilde{\beta}_k}{\sqrt{\var(\tilde{\beta}_k)}}\le t\right]-\Phi(t)\right|+\P[\beta_k\neq\tilde{\beta}_k].\end{equation}
For $k$ fixed, the second quantity tends to zero as $n\to\infty$ and if $\tilde{\beta}_k$ satisfies a central limit theorem, then so does the first quantity, and we are done.

To move to the actual statement of Theorem \ref{ER-CLT}, one needs a slightly more refined version of the a.a.s. equality of $\beta$ and $\tilde{\beta}_k$.  In fact, the techniques in \cite{clique2} give that for $p$ in the given regime, $\beta_k=\tilde{\beta}_k$ with probability $1 - o \left( n^{-M} \right)$ for any constant $M > 0$. On the other hand, since a simplicial complex on $n$ vertices has $\beta_k \le f_k \le {n \choose {k+1}}$, in all cases we have that $\beta_k-\tilde{\beta}_k = O \left( n^{k+1} \right)$.
It is shown below that $\var(\tilde{\beta}_k)\sim n^{2k}p^{2\binom{k+1}{2}-1}$.  These estimates together are enough to obtain Theorem \ref{ER-CLT}, as follows.  First note that the above estimates imply that
\[\frac{|\E\beta_k-\E\tilde{\beta_k}|}{\sqrt{\var(\tilde{\beta}_k)}}=o\left(n^{1-M}p^{\frac{1}{2}-\binom{k+1}{2}}\right);\]
choosing $M\ge \frac{k+3}{2}$, we have that in the regime of $p$ specified in the theorem, $\frac{|\E\beta_k-\E\tilde{\beta_k}|}{\sqrt{\var(\tilde{\beta}_k)}}\to0$ as $n\to\infty$.
Now, 
\[\frac{\var(\beta_k)}{\var(\tilde{\beta}_k)}=1+\frac{\var(\beta_k-\tilde{\beta}_k)}{\var(\tilde{\beta}_k)}+2\frac{\cov(\tilde{\beta}_k,\beta_k-\tilde{\beta}_k)}{\var(\tilde{\beta}_k)}.\]
By the Cauchy-Schwarz inequality,
\(\frac{|\cov(\tilde{\beta}_k,\beta_k-\tilde{\beta}_k)|}{\var(\tilde{\beta}_k)}\le\sqrt{\frac{\var(\beta_k-\tilde{\beta}_k)}{\var(\tilde{\beta}_k)}},\)
and 
\[\frac{\var(\beta_k-\tilde{\beta}_k)}{\var(\tilde{\beta}_k)}\le\frac{\E|\beta_k-\tilde{\beta}_k|^2}{\var(\tilde{\beta}_k)}=o\left(n^{2-M}p^{1-2\binom{k+1}{2}}\right),\]
which tends to zero if $M\ge k+3$.  
We thus have that $\frac{\var(\beta_k)}{\var(\tilde{\beta}_k)}\to1$ as $n\to\infty$.  Finally, we get that for any $\delta,\delta',\epsilon>0$, for $n$ large enough, 
\begin{equation*}\begin{split}
\P\left[\frac{\beta_k-\E \beta_k}{\sqrt{\var(\beta_k)}}\le t\right]&=\P\left[\frac{\beta_k-\E \tilde{\beta}_k}{\sqrt{\var(\tilde{\beta}_k)}}\le t\sqrt{\frac{\var(\beta_k)}{\var(\tilde{\beta}_k)}}+\left(\frac{\E\beta_k-\E\tilde{\beta}_k}{\sqrt{\var(\tilde{\beta}_k)}}\right)\right]\\&\le\P\left[\frac{\beta_k-\E \tilde{\beta}_k}{\sqrt{\var(\tilde{\beta}_k)}}\le t(1+\delta)+\delta'\right]\\&\le\Phi(t(1+\delta)+\delta')+\epsilon.
\end{split}\end{equation*}
Letting $\delta,\delta',\epsilon$ tend to zero, we have 
\[\limsup_{n\to\infty}\P\left[\frac{\beta_k-\E \beta_k}{\sqrt{\var(\beta_k)}}\le t\right]\le \Phi(t).\]
The lower bound is proved the same way, and thus it suffices to prove a central limit theorem for $\tilde{\beta}_k$.  To do this, we follow essentially the same proof as the one given in \cite{KM} for $f_k-f_{k-1}-f_{k+1}$.  In particular, the result is an application of the following result of Barbour, Karo{\'n}ski, and Ruci{\'n}ski.
\begin{thm}Let $\{X_{\bf j}:{\bf j}=(j_1,\ldots,j_r)\in J\}$ be a dissociated set of random variables, such that
 $\E X_{\bf j}=0$ for all ${\bf j}$.  Let $W:=\sum_{{\bf j}\in J}X_{\bf j}$ and suppose that the $X_{\bf j}$ are normalized such that $\E W^2=1$.  Then 
\begin{equation}\label{BKR-thm}
d_1(W,Z)\le K\sum_{{\bf j}\in J}\sum_{{\bf k},
{\bf l}\in L_{\bf j}}\Big[\E|X_{\bf j}X_{\bf k}X_{\bf l}|+\E|X_{\bf j}
X_{\bf k}|\E|X_{\bf l}|\Big],
\end{equation}
where $Z$ is a standard normal random variable and $L_{\bf j}$ is the dependency neighborhood of ${\bf j}$.
\end{thm}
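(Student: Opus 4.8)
The plan is to prove the bound by Stein's method for normal approximation. The Wasserstein distance has the representation $d_1(W,Z)=\sup_h|\E h(W)-\E h(Z)|$, where the supremum is over $1$-Lipschitz $h$, and for each such $h$ the bounded solution $f=f_h$ of the Stein equation $f'(w)-wf(w)=h(w)-\E h(Z)$ obeys the standard Stein factor bounds $\|f'\|_\infty,\|f''\|_\infty\le C$ for an absolute constant $C$. Hence it suffices to bound $|\E[f'(W)-Wf(W)]|$ uniformly over functions $f$ with $\|f''\|_\infty\le C$; the constant $K$ in \eqref{BKR-thm} will emerge by collecting the factors of $\|f''\|_\infty$ together with the combinatorial constants produced by the expansions below.

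First I would expand $\E[Wf(W)]=\sum_{\bf j}\E[X_{\bf j}f(W)]$ and use dissociation. Writing $W_{\bf j}=\sum_{{\bf k}\in L_{\bf j}}X_{\bf k}$, the variable $X_{\bf j}$ is independent of $W-W_{\bf j}$, so that $\E[X_{\bf j}f(W-W_{\bf j})]=\E X_{\bf j}\,\E f(W-W_{\bf j})=0$ because $\E X_{\bf j}=0$. Thus $\E[X_{\bf j}f(W)]=\E[X_{\bf j}(f(W)-f(W-W_{\bf j}))]$, and a second-order Taylor expansion $f(W)-f(W-W_{\bf j})=W_{\bf j}f'(W)+R_{\bf j}$ with $|R_{\bf j}|\le\tfrac12\|f''\|_\infty W_{\bf j}^2$ splits the sum into a main term $\E[f'(W)\,V]$, where $V:=\sum_{\bf j}X_{\bf j}W_{\bf j}$, and a remainder bounded by $\tfrac12\|f''\|_\infty\sum_{\bf j}\E|X_{\bf j}W_{\bf j}^2|=\tfrac12\|f''\|_\infty\sum_{\bf j}\sum_{{\bf k},{\bf l}\in L_{\bf j}}\E|X_{\bf j}X_{\bf k}X_{\bf l}|$. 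This remainder is exactly the first family of terms in \eqref{BKR-thm}.

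The normalization step is the observation that $\E V=\sum_{\bf j}\sum_{{\bf k}\in L_{\bf j}}\E[X_{\bf j}X_{\bf k}]=\E W^2=1$, since dissociation forces $\E[X_{\bf j}X_{\bf k}]=0$ whenever ${\bf k}\notin L_{\bf j}$. Consequently the quantity that remains to be controlled is
\[\E[f'(W)]-\E[f'(W)V]=-\bigl(\E[f'(W)V]-\E[f'(W)]\,\E[V]\bigr)=-\cov\bigl(f'(W),V\bigr),\]
and I would bound this covariance term by term via $\cov(f'(W),V)=\sum_{\bf j}\sum_{{\bf k}\in L_{\bf j}}\cov(f'(W),X_{\bf j}X_{\bf k})$. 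For each pair $({\bf j},{\bf k})$, replacing $W$ by the sum $W'$ of those $X_{\bf m}$ whose index is disjoint from ${\bf j}\cup{\bf k}$ makes $f'(W')$ independent of $X_{\bf j}X_{\bf k}$, so the covariance is unchanged upon subtracting $f'(W')$; applying $|f'(W)-f'(W')|\le\|f''\|_\infty|W-W'|$ and expanding $|W-W'|$ as a sum of $|X_{\bf m}|$ over indices ${\bf m}$ meeting ${\bf j}\cup{\bf k}$ produces precisely the two families $\E|X_{\bf j}X_{\bf k}X_{\bf l}|$ and $\E|X_{\bf j}X_{\bf k}|\,\E|X_{\bf l}|$, the latter from the cross term $\E[f'(W)-f'(W')]\,\E[X_{\bf j}X_{\bf k}]$.

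I expect this last covariance estimate to be the main obstacle. It requires a second, more delicate use of the dependency structure, and some care is needed so that the indices ${\bf m}$ appearing when $f'(W)$ is replaced by $f'(W')$ can be absorbed into sums over $L_{\bf j}$; this is where the precise (second-order) meaning of the dependency neighborhood enters, and where the bound must be matched to the stated form. The remaining ingredients---the Stein factor bounds and the Taylor remainder---are routine. Collecting all error terms and taking the supremum over $1$-Lipschitz $h$ then yields \eqref{BKR-thm} with an absolute constant $K$.
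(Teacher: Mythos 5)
This theorem is not proved in the paper at all: it is quoted verbatim as a known result of Barbour, Karo\'nski, and Ruci\'nski and used as a black box, so there is no internal proof to compare against. Your sketch is, in essence, a correct reconstruction of the original Stein's-method argument for dissociated sums. The decomposition is right: the Stein-factor bounds reduce the problem to $|\E[f'(W)-Wf(W)]|$; independence of $X_{\bf j}$ from $W-W_{\bf j}$ plus $\E X_{\bf j}=0$ lets you Taylor-expand to get the main term $\E[f'(W)V]$ with $V=\sum_{\bf j}X_{\bf j}W_{\bf j}$ and a remainder giving the $\E|X_{\bf j}X_{\bf k}X_{\bf l}|$ family; the normalization $\E V=\E W^2=1$ converts the rest to $-\cov(f'(W),V)$; and a second decoupling, replacing $W$ by the sum over indices disjoint from ${\bf j}\cup{\bf k}$, yields both families of terms. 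You correctly identify the one point requiring care: after that second decoupling the index ${\bf l}$ ranges over $L_{\bf j}\cup L_{\bf k}$ rather than $L_{\bf j}$, and one must use the symmetry ${\bf k}\in L_{\bf j}\iff{\bf j}\in L_{\bf k}$ to relabel the triple sum into the stated form, at the cost of a factor absorbed into $K$. With that point filled in, the argument is complete; nothing in the outline would fail.
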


Write 
\[\tilde{\beta}_k=\sum_{A\subseteq \mathcal{V}}(-1)^{|A|+k+1}\xi_A,\]
where $\mathcal{V}$ is the collection of $n$ vertices over which our complex is built and $\xi_A$ is the indicator that $A$ spans a complete graph in the random complex $X(n,p)$; that is, that all $\binom{|A|}{2}$ potential edges between vertices in $A$ are present.  Let $\sigma^2:=\var\left(\tilde{\beta}_k\right)$, and consider the random variable 
\[W:=\frac{\tilde{\beta}_k-\E\tilde{\beta}_k}{\sqrt{\var(\tilde{\beta}_k)}}=\frac{1}{\sigma}\sum_{A\subseteq\mathcal{V}}(-1)^{|A|+k+1}\big(\xi_A-\E\xi_A\big);\]
that is,
\[X_A:=\frac{(-1)^{|A|+k+1}}{\sigma}\big(\xi_A-\E\xi_A\big).\]
It is not hard to see that for any subsets $A,B,C$,
\[\E|X_AX_BX_C|+\E|X_AX_B|\E|X_C|\le \frac{16}{\sigma^3}\E\big[\xi_A\xi_B\xi_C\big],\]
and it thus suffices to estimate
\[\frac{16}{\sigma^3}\sum_{A\subseteq\mathcal{V}}\sum_{B,C\in L_A}\E\big[\xi_A\xi_B\xi_C\big],\]
where for $A\subseteq\mathcal{V}$, $L_A$ is the collection of subsets of $\mathcal{V}$ sharing at least two vertices with $A$ (so that they have at least one potential edge in common).  Decomposing by the sizes of $A,B,C$ and the sizes of their intersections, we have that
\begin{equation*}\begin{split}
\frac{16}{\sigma^3}&\sum_{A\subseteq\mathcal{V}}\sum_{B,C\in L_A}\E\big[\xi_A\xi_B\xi_C\big]\\&=\frac{16}{\sigma^3}\nsum[2]_{\substack{\ell_A,\ell_B,\ell_C\ge 2\\r_{A,B}\ge 2\\r_{A\setminus B,C}\ge 0\\r_{A\cap B,C}\ge (2-r_{A\setminus B,C})_+\\r_{B\setminus A,C}\ge 0}}\mbox{\Large{C}}\,p^{\binom{\ell_A}{2}+\binom{\ell_B}{2}+\binom{\ell_C}{2}-\binom{r_{A,B}}{2}-\binom{r_{A\setminus B,C}+r_{A\cap B,C}}{2}-\binom{r_{B\setminus A,C}+r_{A\cap B,C}}{2}+\binom{r_{A\cap B,C}}{2}},
\end{split}\end{equation*}
where the upper limits all depend only on $k$, and the combinatorial coefficient {\Large{C}} is given by 
\begin{equation*}
\begin{split}\mbox{\Large{C}}&=\binom{n}{\ell_A}\binom{n-\ell_A}{\ell_B-r_{A,B}}\binom{\ell_A}{r_{A,B}}\binom{\ell_B-r_{A,B}}{r_{B\setminus A,C}} \\& \qquad\qquad\times \binom{r_{A,B}}{r_{A\cap B,C}}\binom{\ell_A-r_{A,B}}{r_{A\setminus B,C}}\binom{n-\ell_A-\ell_B+r_{A,B}}{\ell_C-r_{A\setminus B,C}-r_{B\setminus A,C}-r_{A\cap B,C}}\\&\le c_k n^{\ell_A+\ell_B+\ell_C-r_{A,B}-r_{A\setminus B,C}-r_{B\setminus A,C}-r_{A\cap B,C}},\end{split}
\end{equation*}
for a constant $c_k$ depending only on $k$.
If we fix $\ell_A,\ell_B,r_{A,B}$ and ignore for the moment those factors that depend only on these parameters, we are left with sums over $\ell_C$, etc. of terms of size
\begin{equation}\label{noAB}\frac{1}{\sigma^3}n^{\ell_C-r_{A\setminus B,C}-r_{B\setminus A,C}-r_{A\cap B,C}}p^{\binom{\ell_C}{2}-\binom{r_{A\setminus B,C}+r_{A\cap B,C}}{2}-\binom{r_{B\setminus A,C}+r_{A\cap B,C}}{2}+\binom{r_{A\cap B,C}}{2}}.\end{equation}
Now, if $\ell_C$ is increased by one and the new element of $C$ is also in, say $A\setminus B$, then the power of $n$ in the expression above does not change, but the power of $p$ does; the ratio of the new term to the old is
\[p^{\binom{\ell_C+1}{2}-\binom{\ell_C}{2}-\binom{r_{A\setminus B,C}+1 +r_{A\cap B,C}}{2}+\binom{r_{A\setminus B,C}+r_{A\cap B,C}}{2}}=p^{\ell_c-r_{A\setminus B,C}-r_{A\cap B,C}}.\]
Similarly, if $\ell_C$ is increased by one and the new element of $C$ is also in $A\cap B$, then the ratio of the new term to the old is
\[p^{\ell_C-r_{A\setminus B,C}-r_{B\setminus A,C}-r_{A\cap B,C}}.\]
Since in both cases the power on $p$ is nonnegative, adding a new vertex to $C$ which is already in $A\cup B$ can only make the summand smaller.  On the other hand, if $\ell_C$ is increased by 1, and the new vertex is not in $A$ or $B$, then the ratio of the new term to the old is
\(np^{\ell_C}.\)  In the regime that we consider, this tends to infinity for (the old) $\ell_C\le k$ and tends to zero for $\ell_C\ge k+1$; that is, the largest possible order for the expression in \eqref{noAB} is achieved when $\ell_C=k+1$, when $r_{A\setminus B,C}+r_{A\cap B,C}=2$ and $r_{B\setminus A,C}=0$.  Using these values in \eqref{noAB} yields 
\begin{equation}\label{noAB2}
\frac{1}{\sigma^3}n^{k-1}p^{\binom{k+1}{2}-1}.
\end{equation}
Now suppose that only $\ell_A$ is fixed, and ignore the part of the summand depending only on its value.  We thus must consider summands of the size
\begin{equation}\label{noA}
\frac{1}{\sigma^3}n^{\ell_B-r_{A,B}+k-1}p^{\binom{\ell_B}{2}-\binom{r_{A,B}}{2}+\binom{k+1}{2}-1}.
\end{equation}
As before, if $\ell_B$ is increased and so is $r_{A,B}$, then the expression can only get smaller.  If $\ell_B$ is increased by 1 while $r_{A,B}$ stays fixed, then the ratio of the new expression to the old is $np^{\ell_B}$, and so we once again see that the largest possible size of the expression in \eqref{noA} is achieved when $\ell_B=k+1$ and $r_{A,B}=2$; the quantity in \eqref{noA} is thus bounded above by 
\begin{equation}\label{noA2}
\frac{1}{\sigma^3}n^{2k-2}p^{2\binom{k+1}{2}-2}.
\end{equation}
Finally, considering the full term, we have the upper bound of 
\begin{equation}\label{withA}
\frac{1}{\sigma^3}n^{\ell_A+2k-2}p^{\binom{\ell_A}{2}+2\binom{k+1}{2}-2};
\end{equation}
by the same argument one last time, this expression is maximized when $\ell_A=k+1$, yielding
\begin{equation}\label{finaltermbound}
\frac{1}{\sigma^3}n^{3k-1}p^{3\binom{k+1}{2}-2};
\end{equation}
that is, Theorem \ref{BKR-thm} implies that 
\[d_1(W,Z)\le\frac{C}{\sigma^3}n^{3k-1}p^{3\binom{k+1}{2}-2},\]
where $W=\frac{\tilde{\beta}_k-\E\tilde{\beta}_k}{\sqrt{\var(\tilde{\beta}_k)}}.$

The computation of $\sigma^2$ from \cite{KM} essentially goes through as before.  It was shown there that
\[\var(f_k)\sim c_kn^{2k}p^{2\binom{k+1}{2}-1}\]
(the numbered subclaim and equation (4) of \cite{KM} are inconsistent and in fact both wrong: unfortunate casualties of a change of index in the course of editing).
From this it follows that for any $j>0$,
\[\frac{\var(f_{k\pm j})}{\var(f_k)}\to 0;\]
moreover, one can compute covariances as in \cite{KM}, yielding for example the formula 
\[\cov(f_{k+j},f_{k+\ell})=\binom{n}{k+j+1}p^{\binom{k+j+1}{2}+\binom{k+\ell+1}{2}}\sum_{r=2}^{k+j+1}\binom{k+j+1}{r}\binom{n-k-j-1}{k+\ell+1-r}\left(p^{-\binom{r}{2}}-1\right),\]
for $0\le j\le k$ (and similarly in other cases).  Again one confirms that the order of this expression is smaller than the order of the variance of $f_k$, so that we finally have that 
\[\sigma^2\sim n^{2k}p^{2\binom{k+1}{2}-1}.\] 

The sums over indices only contribute constants depending on $k$, so that we have that the error in the abstract normal approximation theorem above is bounded above by
\[c_k\frac{n^{3k-1}p^{3\binom{k+1}{2}-2}}{n^{3k}p^{3\binom{k+1}{2}-\frac{3}{2}}}=\frac{c_k}{n\sqrt{p}},\]
for a constant $c_k$ depending only on $k$.

\section{The \v{C}ech complex}
In \cite{KM} we claimed three different limit theorems for the $k$th Betti number of a random \v{C}ech complex: depending on the sub-regime of the sparse regime, the $k$th Betti number either vanished a.a.s., had an approximate Poisson distribution, or satisfied a central limit theorem.
The approach taken in \cite{KM} works in most of the sparse regime, namely as long as $n^{k+3}r_n^{d(k+2)}\to 0$, but
to deal with the regime
in which $r_n=o(n^{-1/d - \delta})$ for some $\delta > 0$, but $n^{k+3}r_n^{d(k+2)}$ is bounded away from zero, a slightly different argument is needed, for the same reason as in the previous section.  

 We begin by noting that one can write $\beta_k$ semi-explicitly as follows. 
Let $S_k$ denote the number of empty k+1-dimensional simplex components of the \v{C}ech complex $\mathcal{C}(X_1,\ldots,X_n)$ spanned by $X_1,\ldots,X_n$.
Note that every such connected component has exactly $k+2$ vertices.

For every pair of integers $i > k+2$ and $j > 0$, let $X_{i,j}$ denote the number of
connected components $C$ of $\mathcal{C}(X_1,\ldots,X_n)$ on $i$ vertices such that $\beta_k(C) = j$. In other
words $X_{i,j}$ counts the components on $i$ vertices which contribute exactly $j$ to $\beta_k$.

Then 

\[\beta_k = S_k + \sum_{i > k+2, j > 0} j X_{i,j}.\]

A central limit theorem for $\beta_k$ is a indeed a consequence of a central limit theorem for $S_k$ as claimed in \cite{KM}, by a slightly more careful analysis.
%

Set $m = \lfloor 1 + 1 /( \delta d) \rfloor$, and define the truncated sum

\[\tilde{\beta}_k = S_k + \sum_{i = k+3 }^{m} \sum_{ j > 0} j X_{i,j}.\]

By a modification of the argument in \cite{KM}, one obtains the following.
\begin{thm}\label{clt-tilde}
With notation as above, for $r_n=o\left(n^{-1/d - \delta}\right)$ and $\lim_{n\to\infty}n^{k+2}r_n^{d(k+1)}\to\infty$,
$$\frac{1}{\sqrt{n(nr_n^d)^{k+1}}}\Big(\tilde{\beta}_{k}-\E\left[\tilde{\beta}_{k}\right]\Big)
\Longrightarrow\mathcal{N}\left(0,\frac{\mu_{k+2,1}}{(k+2)!}\right),$$
where $\mu_{k+2,1}$ is a constant depending only on $f$ and $k$.
\end{thm}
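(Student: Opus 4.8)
The plan is to follow the template of Section 1: realize $\tilde{\beta}_k$ as a sum of bounded, essentially local functionals indexed by vertex subsets, normalize, apply a dependency-based normal approximation, and show the resulting error vanishes in the stated regime; along the way the variance computation identifies the limiting constant $\mu_{k+2,1}/(k+2)!$. Concretely, for each subset $A\subseteq\{X_1,\ldots,X_n\}$ with $k+2\le|A|\le m$, set
\[\eta_A:=\beta_k\big(\mathcal{C}[A]\big)\,\1\big[A\text{ is a connected component of }\mathcal{C}(X_1,\ldots,X_n)\big],\]
so that $\tilde{\beta}_k=\sum_A\eta_A$ and $S_k=\sum_{|A|=k+2}\eta_A$. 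Because $|A|\le m$ is bounded, each $\eta_A$ is bounded by a constant depending only on $m$ and $k$. The crucial structural observation is that whether $A$ is a connected component, and the value of $\beta_k(\mathcal{C}[A])$, depend only on the points lying within distance $2r_n$ of $A$: no more distant point can be joined to $A$ in the \v{C}ech complex. Thus $\eta_A$ is a local functional of the configuration, and $\eta_A,\eta_B$ are dependent only when $A\cap B\ne\emptyset$ or the $2r_n$-neighborhoods of $A$ and $B$ meet.

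First I would pin down the variance. The expected number of components on exactly $i$ vertices scales like $n^i r_n^{d(i-1)}$, since one needs $i$ points forming $i-1$ independent proximity constraints in order to be connected, so that the ratio of the $i$-vertex contribution to $\E[S_k]$ is of order $(nr_n^d)^{\,i-(k+2)}\to0$ for $i>k+2$. Hence the $S_k$ term dominates both the mean and the variance of $\tilde{\beta}_k$, and a first- and second-moment computation for isolated empty $(k+1)$-simplices, in which the off-diagonal and cross-size covariances are of strictly smaller order, gives
\[\var(\tilde{\beta}_k)\sim\var(S_k)\sim\frac{\mu_{k+2,1}}{(k+2)!}\,n\big(nr_n^d\big)^{k+1},\]
where $\mu_{k+2,1}$ is the integral over the $k+1$ relative positions of the indicator that $k+2$ points span an empty $(k+1)$-simplex. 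This identifies $\sigma^2:=\var(\tilde{\beta}_k)$ and the stated limiting variance.

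Next I would apply a normal-approximation theorem to $W:=(\tilde{\beta}_k-\E\tilde{\beta}_k)/\sigma=\sum_A X_A$, where $X_A:=(\eta_A-\E\eta_A)/\sigma$. As in Section 1 the error is controlled by sums of third-order type, $\sigma^{-3}\sum_A\sum_{B,C\in L_A}\big[\E|\eta_A\eta_B\eta_C|+\E|\eta_A\eta_B|\,\E|\eta_C|\big]$, where $L_A$ is the dependency neighborhood consisting of subsets that either share a vertex with $A$ or lie within distance $4r_n$ of it. Decomposing by the sizes of $A,B,C$ and the sizes and geometric overlaps of their neighborhoods, exactly as in the clique-complex count but with a factor $r_n^d$ in place of $p$ for each proximity constraint, each summand is a bounded power of $n$ and $r_n^d$; the dominant admissible configuration, divided by $\sigma^3\sim\big(n(nr_n^d)^{k+1}\big)^{3/2}$, produces a bound that tends to zero precisely when $n^{k+2}r_n^{d(k+1)}\to\infty$. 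Since the index sums contribute only constants depending on $k$, this yields $d_1(W,Z)\to0$ and hence the central limit theorem for $\tilde{\beta}_k$.

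The main obstacle, and the reason the argument must be modified rather than quoted verbatim from the clique case, is that the component functionals $\eta_A$ are not genuinely dissociated for the binomial point process: the isolation requirement formally refers to all $n$ points, so the family does not satisfy the hypotheses of the displayed theorem of Barbour, Karo\'{n}ski, and Ruci\'{n}ski directly. The remedy is to exploit that isolation depends only on points within $2r_n$ of $A$: passing to a Poisson process of intensity $n$ makes $\eta_A$ a function of the configuration in the bounded neighborhood of $A$, so that disjoint-neighborhood functionals are genuinely independent and the dependency structure above is exact. Verifying that de-Poissonization does not disturb the limiting law, and, in order later to transfer the conclusion from $\tilde{\beta}_k$ to $\beta_k$ as in \eqref{not-quite}, that the discarded large components satisfy $\E|\beta_k-\tilde{\beta}_k|=o(\sigma)$ through the choice $m=\lfloor 1+1/(\delta d)\rfloor$ together with the crude bound $\beta_k\le\binom{n}{k+1}$, is the technical crux of the argument.
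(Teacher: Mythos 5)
Your overall architecture (local component functionals, Poissonization, variance identification, de-Poissonization) is reasonable, but the two steps you defer as ``the technical crux'' are exactly where the proof lives, and the central one as you describe it would not go through. The Barbour--Karo{\'n}ski--Ruci{\'n}ski theorem quoted in Section 1 applies to a dissociated family indexed by a \emph{fixed} finite index set $J$; after you Poissonize, your summands $\eta_A$ are indexed by the random collection of subsets of a Poisson process, so that theorem does not apply even granting your locality observation, and you would need a genuinely different normal-approximation tool for sums of local functionals of a point process. The paper avoids this entirely: it does not prove a new normal approximation for $\tilde{\beta}_k^P$ at all. It reuses the CLT for $S_k^P$ already established by Stein's method in \cite{KM} (Theorem \ref{Poissonized_normal}), and shows via the Mecke-type formulas (Theorems \ref{one} and \ref{product}, Lemma \ref{CC_Poisson_means}) that the remainder $R_k^P=\sum_{i\ge k+3,\,j\ge1}jX_{i,j}$ has variance of order $n^{k+3}r_n^{d(k+2)}=o\big(n^{k+2}r_n^{d(k+1)}\big)$ since $nr_n^d\to0$, so $R_k^P$ is negligible after normalization by Chebyshev. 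This is a much softer argument than a fresh dependency-graph bound for the whole truncated sum, and it is the step your proposal replaces with machinery that is not available in the form you invoke it.

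The second gap is de-Poissonization. You assert it ``does not disturb the limiting law,'' but that is false in general: Penrose's theorem (Theorem \ref{de-Poisson}) shows the limiting variance drops from $\sigma^2$ to $\sigma^2-\alpha^2$, where $\alpha$ is the limiting mean of the add-one increments $R_{q,n}$. The paper must, and does, verify that $\alpha=0$ here (the increment is nonzero only when the added point acquires at least $k+1$ neighbours, an event of probability $O\big((nr_n^d)^{k+1}\big)\to0$), together with the conditions \eqref{means}--\eqref{vars}. Without checking $\alpha=0$ you cannot conclude that the i.i.d.\ limit has the same variance $\mu_{k+2,1}/(k+2)!$ as the Poissonized one. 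Finally, the transfer from $\tilde{\beta}_k$ to $\beta_k$ that you fold in at the end belongs to Theorem \ref{final-clt}, not to this statement, and in the paper it is carried out with the explicit normalization $\sqrt{n(nr_n^d)^{k+1}}$ rather than with $\var(\tilde{\beta}_k)$.
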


From here, a central limit theorem for $\beta_k$ itself follows:
\begin{thm}\label{final-clt}
With notation as above, for $r_n=o\left(n^{-1/d - \delta}\right)$ and $\lim_{n\to\infty}n^{k+2}r_n^{d(k+1)}\to\infty$,
$$\frac{1}{\sqrt{n(nr_n^d)^{k+1}}}\Big(\beta_{k}-\E\left[\beta_{k}\right]\Big)
\Longrightarrow\mathcal{N}\left(0,\frac{\mu_{k+2,1}}{(k+2)!}\right),$$
where $\mu_{k+2,1}$ is the same constant as in Theorem \ref{clt-tilde}.
\end{thm}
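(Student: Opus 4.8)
The plan is to transfer the central limit theorem from $\tilde{\beta}_k$ to $\beta_k$ exactly as in Section 1, by showing that the difference $D:=\beta_k-\tilde{\beta}_k=\sum_{i>m}\sum_{j>0}jX_{i,j}$ --- the total contribution to $\beta_k$ of the connected components on more than $m$ vertices --- is negligible at the scale $\sqrt{n(nr_n^d)^{k+1}}$. Writing
\[
\frac{\beta_k-\E\beta_k}{\sqrt{n(nr_n^d)^{k+1}}}=\frac{\tilde{\beta}_k-\E\tilde{\beta}_k}{\sqrt{n(nr_n^d)^{k+1}}}+\frac{D-\E D}{\sqrt{n(nr_n^d)^{k+1}}},
\]
the first term converges weakly to $\mathcal{N}\!\left(0,\mu_{k+2,1}/(k+2)!\right)$ by Theorem \ref{clt-tilde}, so by Slutsky's theorem it suffices to show the second term tends to $0$ in probability. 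By Chebyshev's inequality this reduces to the single estimate $\var(D)=o\big(n(nr_n^d)^{k+1}\big)$; note that, because we re-center $\beta_k$ by its own mean, no separate control of the mean shift $\E D$ is needed, in contrast to the presentation in Section 1.

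For the variance bound I would argue as follows. The number $N_i$ of connected components spanning exactly $i$ vertices satisfies $\E N_i=O\big(n(nr_n^d)^{i-1}\big)$ uniformly in $i$: choosing the $i$ vertices and forcing them to span a connected subcomplex at radius $r_n$ contributes $\binom{n}{i}$ together with a volume factor $O(r_n^{d(i-1)})$ from a spanning-tree constraint, the number of tree shapes being absorbed into the constant. Letting $Y_i:=\sum_{|C|=i}\beta_k(C)\le \binom{i}{k+1}N_i$ be the total $k$th Betti contribution of the $i$-vertex components, so that $D=\sum_{i>m}Y_i$, a second-moment computation of the type carried out for $S_k$ in \cite{KM} gives $\var(Y_i)=O\big(\binom{i}{k+1}^2 n(nr_n^d)^{i-1}\big)$. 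Cross terms between different sizes are then essentially free: by the triangle inequality in $L^2$,
\[
\sqrt{\var(D)}\le\sum_{i>m}\sqrt{\var(Y_i)}=O\Big(\sqrt{n}\sum_{i>m}\tbinom{i}{k+1}(nr_n^d)^{(i-1)/2}\Big),
\]
and since $nr_n^d\to0$ the series is dominated by its first term $i=m+1$, yielding $\var(D)=O\big(n(nr_n^d)^m\big)$.

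It remains to observe that $m\ge k+2$, which is automatic in this regime: the hypotheses $r_n=o(n^{-1/d-\delta})$ and $n^{k+2}r_n^{d(k+1)}=n(nr_n^d)^{k+1}\to\infty$ together force $n^{-1/(k+1)}\ll nr_n^d\ll n^{-\delta d}$, so the regime is nonempty only when $\delta d<1/(k+1)$, whence $1+1/(\delta d)>k+2$ and $m=\lfloor 1+1/(\delta d)\rfloor\ge k+2$. Therefore
\[
\frac{\var(D)}{n(nr_n^d)^{k+1}}=O\big((nr_n^d)^{m-k-1}\big)\longrightarrow0
\]
as $n\to\infty$, which completes the reduction and hence the proof.

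The main obstacle is the per-size variance estimate $\var(Y_i)=O\big(\binom{i}{k+1}^2 n(nr_n^d)^{i-1}\big)$. In the Erd\H{o}s--R\'enyi setting of Section 1 the event $\{\beta_k\neq\tilde{\beta}_k\}$ had super-polynomially small probability, so the crude deterministic bound $|\beta_k-\tilde{\beta}_k|\le\binom{n}{k+1}$ already controlled both mean and variance. Here $\P[D\neq0]=O\big(n(nr_n^d)^m\big)$ decays only polynomially, and combining it with $D\le\binom{n}{k+1}$ would give the useless bound $\E D^2=O\big(n^{2k+3}(nr_n^d)^m\big)$, which is not $o\big(n(nr_n^d)^{k+1}\big)$ for the admissible values of $m$. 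One is therefore forced to compute the second moment of $D$ honestly; the delicate point is verifying that the contributions of pairs of distinct, spatially separated components do not dominate the diagonal, which is precisely where the correlation estimates of \cite{KM} must be invoked.
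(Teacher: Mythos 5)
Your reduction has the right overall shape (transfer the CLT from $\tilde{\beta}_k$ to $\beta_k$ by showing $D=\beta_k-\tilde{\beta}_k$ is negligible at scale $\sqrt{n(nr_n^d)^{k+1}}$, then invoke Slutsky), and your observation that $m\ge k+2$ in the nonempty regime is correct and worth recording. But the argument as written has a genuine gap, and you name it yourself: everything rests on the per-size variance estimate $\var(Y_i)=O\bigl(\binom{i}{k+1}^2\,n(nr_n^d)^{i-1}\bigr)$, which you assert ``by a second-moment computation of the type carried out for $S_k$'' and then concede is ``the main obstacle'' requiring correlation estimates you have not supplied. A reduction to an unproved lemma is not a proof; moreover the constant in $\E N_i=O(n(nr_n^d)^{i-1})$ is not uniform in $i$ as you claim --- the $i^{i-2}$ spanning trees give $i^{i-2}/i!\sim e^i i^{-5/2}$, so the geometric ratio is really $Cnr_n^d$ for some constant $C$ --- which is harmless but symptomatic of the estimates being waved at rather than carried out.

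The larger point is that the second moment is not needed at all, and your remark that ``no separate control of the mean shift $\E D$ is needed'' steered you away from the easy road. Since $D\ge 0$, one has $\E|D-\E D|\le 2\E D$, and the elementary first-moment bound
\[
\E\Bigl[\sum_{j>0}jX_{i,j}\Bigr]\le \binom{n}{i}\,i^{i-2}\bigl(\|f\|_\infty r_n^d\bigr)^{i-1}\binom{i}{k+1}
\]
(choose the vertices, force a spanning tree, bound the Betti contribution), summed over $i>m$ with the tail $i>S:=\lceil 2/(d\delta)+1\rceil$ handled by the geometric decay in $i$, already gives $\E D=O(n^{-\delta d})\to 0$. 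Markov's inequality then yields
\(
\P\bigl[|D-\E D|>\epsilon\sqrt{n(nr_n^d)^{k+1}}\bigr]\le 2\E D/\bigl(\epsilon\sqrt{n(nr_n^d)^{k+1}}\bigr)\to 0,
\)
which is exactly the negligibility you need, with no control of correlations between components. This is the route the paper takes. So the approach can be repaired, but as it stands the proposal replaces a one-line Markov bound with a second-moment computation that it does not perform.
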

 Compare with Theorem 3.2 (iii) of \cite{KM}: the range of $r_n$ is slightly more restricted (there, $\delta$ was taken to be 0); the theorem here is also stated in terms of a specific numerical normalization, rather than abstractly in terms of the variance of $\beta_k$ as in \cite{KM}.

\begin{proof}[Proof of Theorem \ref{final-clt} from Theorem \ref{clt-tilde}]
Observe that 
\begin{equation*}\begin{split}
\P\left[\frac{\beta_k-\E\beta_k}{\sqrt{n(nr_n^d)^{k+1}}}\le t\right]&\le\P\left[\frac{\tilde{\beta}_k-\E\tilde{\beta}_k}{\sqrt{n(nr_n^d)^{k+1}}}\le t+\epsilon\right]+\P\left[\left|\frac{\tilde{\beta}_k-\beta_k-\E[\tilde{\beta}_k-\beta_k]}{\sqrt{n(nr_n^d)^{k+1}}}\right|>\epsilon\right]\\&\le\P\left[\frac{\tilde{\beta}_k-\E\tilde{\beta}_k}{\sqrt{n(nr_n^d)^{k+1}}}\le t+\epsilon\right]+\frac{2\E|\beta_k-\tilde{\beta}_k|}{\epsilon \sqrt{n(nr_n^d)^{k+1}}}.\end{split}\end{equation*}

\noindent {\bf Claim:} $\displaystyle\frac{\E|\beta_k-\tilde{\beta}_k|}{\sqrt{n(nr_n^d)^{k+1}}}\xrightarrow{n\to\infty}0$.

From the claim it follows that, given $\epsilon>0$, there is an $n$ large enough so that 
\[\P\left[\frac{\beta_k-\E\beta_k}{\sqrt{n(nr_n^d)^{k+1}}}\le t\right]\le\P\left[\frac{\tilde{\beta}_k-\E\tilde{\beta}_k}{\sqrt{n(nr_n^d)^{k+1}}}\le t+\epsilon\right]+\epsilon.\]
Using the central limit theorem already established for $\tilde{\beta}_k$ and then letting $\epsilon\to0$ shows that 
\[\limsup_{n\to\infty}\P\left[\frac{\beta_k-\E\beta_k}{\sqrt{n(nr_n^d)^{k+1}}}\le t\right]\le \P\left[\sqrt{\frac{\mu_{k+2,1}}{(k+2)!}}Z\le t\right],\]
where $Z$ is a standard Gaussian random variable.
The corresponding lower bound follows in the same way, so that given the claim, the proof of Theorem \ref{final-clt} is complete.


To prove the claim, observe that

\begin{equation}\begin{split}\label{beta-diff-upper-bound}
|\beta_k-\tilde{\beta}_k|=\sum_{i=m+1}^S\sum_{j>0}jX_{i,j}+\sum_{i=S+1}^n\sum_{j>0}X_{i,j},
\end{split}\end{equation}
where $S=\left\lceil\frac{2}{d\delta}+1\right\rceil$.

Since there are $i^{i-2}$ spanning trees on a set of $i$ vertices, and since a connected component of order $i$ can contribute at most $i \choose k+1$ to $\beta_k$, we have that for fixed $i\ge m+1$, 
\begin{align*}
\E \left[ \sum_{ j > 0} j X_{i,j}  \right] & \le  {n \choose i} i^{i-2} \left(\|f\|_\infty r_n^{d}\right)^{(i-1)} {i \choose k+1}  \le  \frac{n^i}{i!} i^{i-2} \left(\|f\|_\infty r_n^{d}\right)^{(i-1)} {i \choose k+1}.
\end{align*}
It follows that 
\begin{align*}
2S \sum_{i \ge m+1}^S  \E \left[ \sum_{ j > 0} j X_{i,j}  \right] &=O\left(n^{m+1}r_n^{dm}\right)= O \left( n^{-d \delta} \right),\\
\end{align*}
since $r_n = o(n^{-1/d - \delta})$; this takes care of the first sum in \eqref{beta-diff-upper-bound}.

For the second sum, the same estimate on the terms gives that 
\[\sum_{i=S+1}^n \E \left[  \sum_{ j > 0} j X_{i,j} \right]=O\left(n^2(nr_n^d)^S\right)=O\left(n^{2-d\delta S}\right).\]
Since $S>\frac{2}{d\delta}+1$, we have that $
\E \left|\beta_k - \tilde{\beta}_k \right| =O \left( n^{- \delta d} \right),$ which proves the claim.

\end{proof}

\bigskip

As in \cite{KM}, to prove Theorem \ref{clt-tilde}, we consider the Poissonized problem first, then recover the i.i.d.\ case.

Let $N_n$ be a Poisson random variable with mean $n$, and let $\p_n=
\{X_1,\ldots,X_{N_n}\},$ where $\{X_i\}_{i=1}^\infty$ is an i.i.d.\ sequence
of random points in $\R^d$ with density $f$.  Then $\p_n$ is a Poisson 
process with intensity $nf(\cdot)$, and one can define $S_k^P$ and 
$X_{i,j}^P$ for the random points $\p_n$ analogously to the earlier 
definitions.  In what follows, assume that $k\ge 2$; that is, 
the empty $k$-simplices are at least empty triangles.  Empty 
1-simplices
are simply pairs of vertices which are not connected, and
different arguments are needed in that case.

In order to compute expectations for the expressions which arise in
the Poissonized case, the following results are
useful.

\begin{thm}[See Theorem 1.6 of \cite{penrose}]\label{one}
Let $\lambda>0$  and let $\p_\lambda$ be a Poisson process with intensity 
$\lambda f(\cdot)$.  Let 
$j\in\N$, and suppose that $h(\Y,\S)$ is a bounded measurable
function on pairs $(\Y,\S)$ with $\S$ a finite subset of $\R^d$ and 
$\Y\subseteq\S$, such that $h(\Y,\S)=0$ unless $|\Y|=j$.  Then
$$\E\left[\sum_{\Y\subseteq\p_\lambda}h(\Y,\p_\lambda)\right]=\frac{\lambda^j}{j!}
\E h(\X_j,\X_j\cup\p_\lambda),$$
where $\X_j$ is a set of $j$ i.i.d. points in $\R^d$ with density $f$, 
independent of $\p_\lambda$.
\end{thm}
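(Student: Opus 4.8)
The plan is to recognize Theorem~\ref{one} as a version of the multivariate Mecke (Palm) formula for Poisson processes and to prove it directly, using the hypothesis that $f$ is a probability density. The point is that $\p_\lambda$ then has finite total intensity $\lambda$, so that $N_\lambda:=|\p_\lambda|$ is a Poisson$(\lambda)$ random variable, finite almost surely, and conditionally on $\{N_\lambda=N\}$ the points of $\p_\lambda$ are distributed as $N$ i.i.d.\ points $Y_1,\ldots,Y_N$ with common density $f$. Throughout, the assumption that $h$ is bounded is what licenses the interchanges of expectation and summation: the left-hand side is dominated by $\|h\|_\infty\,\E\binom{N_\lambda}{j}=\|h\|_\infty\,\lambda^j/j!$, so everything in sight is absolutely convergent.

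First I would condition on the value of $N_\lambda$, using that $h(\Y,\cdot)=0$ unless $|\Y|=j$, to write
\[
\E\left[\sum_{\Y\subseteq\p_\lambda}h(\Y,\p_\lambda)\right]
=\sum_{N=j}^{\infty}e^{-\lambda}\frac{\lambda^N}{N!}\,
\E\left[\sum_{\substack{\Y\subseteq\{Y_1,\ldots,Y_N\}\\|\Y|=j}}h\big(\Y,\{Y_1,\ldots,Y_N\}\big)\right].
\]
Since the $Y_i$ are i.i.d., every $j$-element subset $\Y$ contributes the same expectation, so by symmetry the inner expectation equals $\binom{N}{j}\,\E\,h\big(\X_j,\{Y_1,\ldots,Y_N\}\big)$ with $\X_j=\{Y_1,\ldots,Y_j\}$.

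Next I would perform the series bookkeeping. Using $\frac{1}{N!}\binom{N}{j}=\frac{1}{j!\,(N-j)!}$ and reindexing by $M=N-j$ turns the right-hand side into
\[
\frac{\lambda^j}{j!}\sum_{M=0}^{\infty}e^{-\lambda}\frac{\lambda^M}{M!}\,
\E\,h\big(\X_j,\X_j\cup\{Y_{j+1},\ldots,Y_{j+M}\}\big).
\]
The structural heart of the argument is the observation that, since $M$ is itself Poisson$(\lambda)$ and $Y_{j+1},Y_{j+2},\ldots$ are i.i.d.\ with density $f$ and independent of $\X_j$, the set $\{Y_{j+1},\ldots,Y_{j+M}\}$ is a Poisson process of intensity $\lambda f(\cdot)$ independent of $\X_j$. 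It may therefore be replaced by an independent copy of $\p_\lambda$, which collapses the sum to $\frac{\lambda^j}{j!}\,\E\,h(\X_j,\X_j\cup\p_\lambda)$, exactly the asserted formula.

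The step I expect to demand the most care is this last one: checking that splitting off the first $j$ points of an i.i.d.\ sample whose independent length is Poisson$(\lambda)$ leaves a genuine Poisson process of the same intensity, independent of those $j$ points. This is precisely the superposition/independence property of Poisson processes (the finite-intensity form of Slivnyak's theorem), and it is made transparent here by the identity $\frac{1}{N!}\binom{N}{j}=\frac{1}{j!\,(N-j)!}$ together with the factorization $\frac{e^{-\lambda}\lambda^N}{N!}\binom{N}{j}=\frac{\lambda^j}{j!}\cdot\frac{e^{-\lambda}\lambda^{M}}{M!}$ of the conditioning weights (with $M=N-j$), which is exactly what lets the residual sum reassemble into an independent Poisson expectation. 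Everything else is routine, justified by the boundedness of $h$.
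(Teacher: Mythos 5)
Your proof is correct. The paper itself offers no proof of this statement --- it is imported verbatim as Theorem 1.6 of \cite{penrose} --- so there is nothing internal to compare against; your argument is essentially the standard one for the finite-total-intensity case (condition on the Poisson number of points, use exchangeability of the i.i.d.\ locations to reduce the inner sum to a single $j$-subset, and then observe that the factorization $e^{-\lambda}\frac{\lambda^N}{N!}\binom{N}{j}=\frac{\lambda^j}{j!}\cdot e^{-\lambda}\frac{\lambda^{N-j}}{(N-j)!}$ reassembles the residual points into an independent Poisson process of the same intensity). The only hypotheses you implicitly use beyond what you state --- that the process is a.s.\ simple because its intensity has a density, so that ``subsets of $\p_\lambda$'' is unambiguous, and that the factorial moment $\E\binom{N_\lambda}{j}=\lambda^j/j!$ justifies the absolute convergence --- are both handled correctly, so the argument is complete.
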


From this, one can prove the following (see \cite{KM} for the proof).
\begin{thm}\label{product}
Let $\lambda>0$ and $k,j_1,\ldots,j_k\in\N$; define $j:=\sum_{i=1}^kj_i$.  
For $1\le i\le k$, suppose
$h_i(\Y,\S)$ is a bounded measurable function of pairs $(\Y,\S)$ of
finite subsets of $\R^d$ with $\Y\subseteq\S$, such that $h_i(\Y,\S)=0$
if $|\Y|\neq j_i$.  Then
$$\E\left[\sum_{\Y_1,\subseteq\p_\lambda}\cdots\sum_{\Y_k\subseteq\p_\lambda}\left(
\prod_{i=1}^kh_i(\Y_i)\right)\1_{\{\Y_i\cap\Y_j=\emptyset\,{\rm for }\,
i\neq j\}}\right]=\E\left[\prod_{i=1}^k\left(\frac{\lambda^{j_i}}{j_i!}\right)
h_i(\X_{j_i},\left(\cup_{i=1}^{k}\X_{j_i}\right)\cup\p_n)\right],$$
\end{thm}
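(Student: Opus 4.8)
The plan is to prove Theorem \ref{product} by induction on $k$, peeling off one sum at a time using Theorem \ref{one}. The base case $k=1$ is exactly Theorem \ref{one}. For the inductive step I first isolate the outermost sum, writing the configuration argument of every $h_i$ as the full point set $\S$ and defining
\[
g(\Y_1,\S):=h_1(\Y_1,\S)\sum_{\Y_2,\ldots,\Y_k\subseteq\S}\Big(\prod_{i=2}^{k} h_i(\Y_i,\S)\Big)\1_{\{\Y_i\cap\Y_{i'}=\emptyset\ \text{for}\ i\neq i'\}},
\]
which vanishes unless $|\Y_1|=j_1$, so that the left-hand side of the theorem equals $\E\big[\sum_{\Y_1\subseteq\p_\lambda}g(\Y_1,\p_\lambda)\big]$. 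Applying Theorem \ref{one} to the $\Y_1$-sum then replaces $\Y_1$ by an independent set $\X_{j_1}$ and the configuration $\p_\lambda$ by $\X_{j_1}\cup\p_\lambda$, producing the prefactor $\lambda^{j_1}/j_1!$.

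Next I would observe that in $g(\X_{j_1},\X_{j_1}\cup\p_\lambda)$ the constraints $\Y_i\cap\X_{j_1}=\emptyset$ restrict the inner sums back to $\Y_i\subseteq\p_\lambda$ (using that $\X_{j_1}$ and $\p_\lambda$ are disjoint almost surely), so that, conditionally on $\X_{j_1}$, the remaining expression has exactly the form of the theorem for the $k-1$ functions $\tilde h_2,\ldots,\tilde h_k$, where $\tilde h_i(\Y,\S):=h_i(\Y,\X_{j_1}\cup\S)$ for $i\ge 3$ and $\tilde h_2(\Y,\S):=h_1(\X_{j_1},\X_{j_1}\cup\S)\,h_2(\Y,\X_{j_1}\cup\S)$ absorbs the ($\Y_2,\ldots,\Y_k$-independent) factor $h_1$. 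Invoking the inductive hypothesis conditionally on $\X_{j_1}$ introduces independent sets $\X_{j_2},\ldots,\X_{j_k}$ and the product of prefactors $\prod_{i=2}^{k}\lambda^{j_i}/j_i!$; unwinding the definitions of the $\tilde h_i$, noting $\X_{j_1}\cup(\cup_{i\ge2}\X_{j_i})\cup\p_\lambda=(\cup_{i=1}^{k}\X_{j_i})\cup\p_\lambda$, and taking the outer expectation over $\X_{j_1}$ yields exactly the claimed identity. The disjointness indicator disappears along the way: each cross-constraint $\Y_i\cap\Y_{i'}=\emptyset$ is consumed at the moment $\Y_{i'}$ is promoted to an independent set (disjoint from $\p_\lambda$ almost surely), which is precisely where the standing hypothesis that $f$ is a genuine density, so that all points involved are almost surely distinct, enters.

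The one genuine technical point, which I expect to be the main obstacle to full rigor, is that $g(\Y_1,\S)$ is not bounded uniformly in $\S$: the inner sums contribute a factor of order $\binom{|\S|}{j_2}\cdots\binom{|\S|}{j_k}$, which grows polynomially in $|\S|$, whereas Theorem \ref{one} is stated for bounded $h$. I would dispatch this by the standard truncation, applying Theorem \ref{one} to $g(\Y_1,\S)\1_{\{|\S|\le N\}}$ and passing to the limit $N\to\infty$ by monotone convergence when the $h_i$ are nonnegative, and by splitting into positive and negative parts in general. The requisite integrability holds because $\p_\lambda$ is almost surely finite with all factorial moments of $|\p_\lambda|$ finite, so that $\E\big[\prod_{i}\binom{|\p_\lambda|}{j_i}\big]<\infty$ provides an integrable dominating function; equivalently, one simply invokes the Mecke equation in its form valid for all nonnegative measurable integrands. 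With this in place the induction closes and the proof is complete.
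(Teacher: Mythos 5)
Your induction on $k$ via Theorem \ref{one} is correct, and it is the route the paper itself intends: the text introduces Theorem \ref{product} with the phrase ``From this, one can prove the following'' and defers the details to \cite{KM}, the standard argument being exactly your peeling-off of one sum at a time, with the leftover factor $h_1$ absorbed into one of the remaining functions and the disjointness constraints consumed by the almost-sure disjointness of $\X_{j_1}$ and $\p_\lambda$. Your treatment of the one delicate point --- that the wrapped function $g(\Y_1,\S)$ is not uniformly bounded in $\S$, so Theorem \ref{one} must be applied after truncation and the limit justified by the finiteness of the factorial moments of $|\p_\lambda|$ --- is also sound, so nothing further is needed.
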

where $\X_{j_i}$ are $j_i$ i.i.d points in $\R^d$ with density $f$,  $\p_\lambda$
is a Poisson process with intensity $\lambda f(\cdot)$, and $\{\X_{j_i}\}_{i=1}^k$ and 
$\p_\lambda$ are  all independent.

\medskip

One can apply these results to compute the mean and variance of $\tilde{\beta}_{k,A}^P$, the contribution to $\tilde{\beta}_k^P$ from components 
whose left-most vertex is in an open set $A$ with $\vol(\partial A)=0$.  

In order to apply the lemmas, the corresponding means in the i.i.d.\ case are needed.

\begin{lemma}\label{exp-order}
Let $g_{i,j,A}(x_1,\ldots,x_i)$ be the indicator that $\mathcal{C}(\{x_1,\ldots,x_i\})$ is connected, has $k$th Betti number equal to $j$, and has left-most-point in $A$.  Then for $\{X_i\}$ i.i.d. with density $f$ as before, there is a constant $\mu_{i,j,A}$ depending only on $i,j, f,$ and $A$ such that
\[\lim_{n\to\infty}r_n^{-d(i-1)}\E\left[g_{i,j,A}(X_1,\ldots,X_i)\right]=\mu_{i,j,A}.\]
\end{lemma}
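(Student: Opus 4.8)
The plan is to write the expectation as an integral over $(\R^d)^i$, exploit the scale-invariance of the \v{C}ech complex to extract the factor $r_n^{d(i-1)}$ cleanly, and then pass to the limit $r_n\to 0$ by dominated convergence. Writing $\{X_m\}$ for i.i.d.\ points with density $f$,
\[\E\left[g_{i,j,A}(X_1,\ldots,X_i)\right]=\int_{(\R^d)^i}g_{i,j,A}(x_1,\ldots,x_i)\prod_{m=1}^i f(x_m)\,dx_1\cdots dx_i.\]
Since the three defining conditions depend only on the unordered point set, and the leftmost point is a.s.\ unique, exchangeability lets me restrict to the event that $X_1$ is the leftmost point at the cost of a factor $i$:
\[\E\left[g_{i,j,A}\right]=i\int_{(\R^d)^i}g_{i,j,A}(x)\,\1[x_1\text{ leftmost}]\prod_{m=1}^i f(x_m)\,dx.\]

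Next I would rescale. On the region where $x_1$ is leftmost, substitute $y_m:=(x_m-x_1)/r_n$ for $2\le m\le i$, with Jacobian $r_n^{d(i-1)}$. A point $z$ lies in $\bigcap_m B(x_m,r_n)$ iff $(z-x_1)/r_n\in\bigcap_m B(y_m,1)$ (with $y_1:=0$), so the complex $\mathcal{C}(\{x_1,\ldots,x_i\})$ at radius $r_n$ is combinatorially identical to $\mathcal{C}(\{0,y_2,\ldots,y_i\})$ at radius $1$; connectivity, the value of $\beta_k$, and the order of the first coordinates (hence which vertex is leftmost) are all preserved. Writing $h_{i,j}(y_2,\ldots,y_i)$ for the indicator that the radius-$1$ complex on $\{0,y_2,\ldots,y_i\}$ is connected, has $\beta_k=j$, and has $0$ as its leftmost vertex, and letting $x_1\in A$ encode the leftmost-in-$A$ condition, this yields
\[r_n^{-d(i-1)}\E\left[g_{i,j,A}\right]=i\int_A f(x_1)\left(\int_{(\R^d)^{i-1}}h_{i,j}(y)\prod_{m=2}^i f(x_1+r_n y_m)\,dy\right)dx_1.\]

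The crucial observation is that $h_{i,j}$ has compact support: if the radius-$1$ complex on $i$ vertices is connected, then each $y_m$ is joined to $0$ by a path of at most $i-1$ edges, each edge forcing distance at most $2$, so every $y_m$ lies in $K:=B(0,2(i-1))$. On this fixed compact set the integrand is bounded by $\|f\|_\infty^{i-1}$, and since $r_n\to 0$ we have $f(x_1+r_n y_m)\to f(x_1)$ at every continuity point of $f$, uniformly for $y\in K$. Dominated convergence (first in $y$ for fixed $x_1$, then in $x_1$, using the bound $\|f\|_\infty^{i-1}\vol(K)\,f(x_1)\1_A$, which is integrable since $\int f=1$) gives
\[\lim_{n\to\infty}r_n^{-d(i-1)}\E\left[g_{i,j,A}\right]=i\left(\int_K h_{i,j}(y)\,dy\right)\int_A f(x_1)^i\,dx_1=:\mu_{i,j,A},\]
a finite constant depending only on $i,j,f,A$ (and the fixed $k,d$), as claimed.

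The analytic content is concentrated entirely in this last step, and justifying the interchange of limit and integral is the main obstacle. It rests on two facts: that the rescaled indicator $h_{i,j}$ has compact support, so that no mass escapes to infinity as $r_n\to 0$, and that the shifted densities converge, $f(x_1+r_n y)\to f(x_1)$ for almost every $x_1$. The compact-support bound is elementary from connectivity, and the convergence of the shifted densities uses the regularity of $f$ assumed in \cite{KM} (boundedness together with a.e.\ continuity, or equivalently a Lebesgue-point argument). The scale-invariance and symmetry reductions are routine, so the whole lemma reduces to this single dominated-convergence argument.
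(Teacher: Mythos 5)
Your proposal is correct and follows essentially the same route as the paper's proof (which the erratum attributes to Penrose, Chapter 3, and sketches in a commented-out block): write the expectation as an integral, rescale by $r_n$ about $x_1$ to extract the Jacobian factor $r_n^{d(i-1)}$, use the compact support of the rescaled connectivity indicator, and pass to the limit by dominated convergence. One small caution: ``a.e.\ continuity'' and ``a.e.\ point is a Lebesgue point'' are not equivalent (the latter is automatic for any $L^1$ density, the former is a genuine extra hypothesis), so for a general bounded measurable $f$ your pointwise step $f(x_1+r_ny)\to f(x_1)$ must be replaced by the averaged Lebesgue-point estimate over the ball $K$ (with a short telescoping argument for the product of densities), which is exactly how the paper's version handles it.
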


The proof is identical to that of the analagous result in Chapter 3 of Penrose \cite{penrose}.

\begin{lemma}\label{CC_Poisson_means}
For $\mu_{i,j,A}$ as in Lemma \ref{exp-order},
\begin{enumerate}
\item $$\lim_{n\to\infty}n^{-(k+2)}r_n^{-d(k+1)}\E\left[\tilde{\beta}_k^P\right]=
\lim_{n\to\infty}n^{-(k+2)}r_n^{-d(k+1)}\var\left[\tilde{\beta}_{k}^P\right]=\frac{\mu_{k+2,1,A}}{(k+2)!}.$$
\item $$\lim_{n\to\infty}n^{-(k+2)}r_n^{-d(k+1)}\E\left[S_k^P\right]=
\lim_{n\to\infty}n^{-(k+2)}r_n^{-d(k+1)}\var\left[S_{k}^P\right]=\frac{\mu_{k+2,1,A}}{(k+2)!}.$$
\item \begin{equation*}\begin{split}
\lim_{n\to\infty}&n^{-(k+3)}r_n^{-d(k+2)}\E\left[\sum_{\substack{k+3\le i\le m\\j\ge 0}}jX_{i,j}\right]\\&=\lim_{n\to\infty}n^{-(k+3)}r_n^{-d(k+2)}\var\left[\sum_{\substack{ k+3\le i\le m\\j\ge 0}}jX_{i,j}\right]=\frac{1}{(k+3)!}\sum_{j=1}^{\binom{k+3}{k+1}}j\mu_{k+3,j,A}.\end{split}\end{equation*}
\end{enumerate}
\end{lemma}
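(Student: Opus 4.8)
The plan is to deduce part (i) from parts (ii) and (iii), and to establish the latter two by a single Palm-type computation applied to the relevant Poisson sums, using throughout that in this regime $nr_n^d=o(n^{-\delta d})\to 0$, so that components are sparse and typically well separated. Write $S_k^P=\sum_{\Y\subseteq\p_n}h(\Y,\p_n)$, where $h(\Y,\p_n)$ is the indicator that $|\Y|=k+2$, that $\Y$ spans an empty $(k+1)$-simplex with left-most point in $A$, and that $\Y$ is isolated (no point of $\p_n\setminus\Y$ lies within $r_n$ of $\Y$); likewise set $Y_i:=\sum_{j>0}jX_{i,j}^P=\sum_{\Y\subseteq\p_n,\,|\Y|=i}\beta_k(\mathcal{C}(\Y))\,\1\{\Y\text{ is an isolated component with left-most point in }A\}$, so that the sum in (iii) is $\sum_{i=k+3}^m Y_i$ and $\tilde\beta_k^P=S_k^P+\sum_{i=k+3}^m Y_i$.

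For the means I would apply Theorem \ref{one} with $\lambda=n$. For $S_k^P$ (here $j=k+2$) this gives $\E[S_k^P]=\frac{n^{k+2}}{(k+2)!}\E\big[h(\X_{k+2},\X_{k+2}\cup\p_n)\big]$. Conditioning on $\X_{k+2}$, the isolation factor is the Poisson void probability of the $r_n$-neighborhood of $\X_{k+2}$, a set of volume $O(r_n^d)$, hence equal to $1-O(nr_n^d)\to 1$; Lemma \ref{exp-order} then yields $\E[h(\X_{k+2},\X_{k+2}\cup\p_n)]\sim\mu_{k+2,1,A}\,r_n^{d(k+1)}$, which is the mean asserted in (ii). The identical argument applied to $Y_i$ gives $\E[Y_i]\sim\frac{n^i}{i!}\big(\sum_{j}j\mu_{i,j,A}\big)r_n^{d(i-1)}$, and since replacing $i$ by $i+1$ multiplies the order by $nr_n^d\to0$, the sum $\sum_{i=k+3}^m\E[Y_i]$ is dominated by its $i=k+3$ term, which gives the mean in (iii).

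For the variances I would expand the second moment and split the double sum according to $|\Y\cap\Y'|$. The diagonal $\Y=\Y'$ contributes the expectation of the squared weight: for $S_k^P$ the weight is $\{0,1\}$-valued, so this is again $\E[S_k^P]$, while for $Y_{k+3}$ it is $\frac{n^{k+3}}{(k+3)!}\E\big[\beta_k(\mathcal{C}(\X_{k+3}))^2\,\1\{\cdots\}\big]$, which Lemma \ref{exp-order} evaluates in terms of the constants $\mu_{k+3,j,A}$ and which gives the variance in (iii). Terms with $0<|\Y\cap\Y'|<|\Y|$ vanish identically, since two distinct isolated components cannot share a vertex, so $h(\Y)h(\Y')=0$. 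The disjoint terms are computed by Theorem \ref{product} with two factors; they differ from the square of the first moment only through the event that the two localized clusters lie within $r_n$ of each other, which costs an extra factor $O(r_n^d)$ in the anchor integration. Thus the disjoint contribution equals $(\E\,\cdot)^2\big(1+o(1)\big)$ with error of order $(nr_n^d)^{k+2}\cdot n^{k+2}r_n^{d(k+1)}=o(\E[S_k^P])$, so the variance is asymptotic to the diagonal term, giving the stated limits in (ii) and (iii).

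Finally, part (i) follows by combining (ii) and (iii): $\E[\tilde\beta_k^P]=\E[S_k^P]+\sum_{i=k+3}^m\E[Y_i]$, whose second term is smaller by the factor $nr_n^d\to0$, and $\var(\tilde\beta_k^P)=\var(S_k^P)+\var\big(\sum_iY_i\big)+2\cov\big(S_k^P,\sum_iY_i\big)$, where the middle term is of lower order and the covariance is controlled by Cauchy--Schwarz, $|\cov|\le\sqrt{\var(S_k^P)\,\var(\sum_iY_i)}=O\big(\sqrt{nr_n^d}\big)\var(S_k^P)=o(\var(S_k^P))$. I expect the variance estimate to be the main obstacle: one must simultaneously use the combinatorial fact that distinct isolated components are vertex-disjoint (which kills the shared-vertex terms) and the geometric estimate that the disjoint pair-correlation factorizes up to a relative error of order $(nr_n^d)^{k+2}$. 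It is precisely the sparsity hypothesis $nr_n^d\to0$ that makes these corrections negligible against the diagonal, Poisson-type term, so that variance and mean share the same leading order.
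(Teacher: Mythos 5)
Your proposal is correct and follows essentially the same route as the paper: the Mecke/Palm formulas (Theorems \ref{one} and \ref{product}) for the first and second moments, the void-probability argument showing the isolation factor tends to $1$ since $nr_n^d\to0$, the vanishing of shared-vertex cross terms because distinct components are vertex-disjoint, and the factorization of the disjoint pair terms up to an error of order $(nr_n^d)^{k+2}\cdot n^{k+2}r_n^{d(k+1)}$ coming from the event that the two clusters' neighborhoods interact (the paper makes this last step explicit via a three-term decomposition against an independent copy $\p_n'$ of the process). The only organizational difference is that you derive part (i) from (ii) and (iii) via Cauchy--Schwarz, whereas the paper proves (i) directly and obtains (ii) and (iii) by restricting the range of $i$; this is immaterial.
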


\begin{proof}
For $i\ge k+2$ and $j\ge 1$, let $h_{i,j,A}(\{x_0,\ldots, x_k\},\X)$ be the indicator that
$\{x_0,\ldots,x_{i-1}\}\subseteq\X$ form a connected component of $\mathcal{C}(\X)$ with $\tilde{\beta}_k(\mathcal{C}(x_0,\ldots,x_{i-1}))=j$, whose left-most point is in $A$.  Then with $\tilde{\beta}_{k,A}^P$ denoting the sub-sum of $\tilde{\beta}_k$ coming from those components with left-most point in $A$,
\begin{equation}\begin{split}\label{mean-unPoisson}
\E[\tilde{\beta}_{k,A}^P]&=\E\left[\sum_{\substack{ k+2\le i\le m\\j\ge 1}}\sum_{\Y\subseteq\p_\lambda}jh_{i,j,A}(\Y,\p_n)\right]
=\sum_{\substack{k+2\le i\le m\\j\ge 1}}\frac{n^{i}}{i!}\E\left[jh_{i,j,A}(\X_{i},\X_{i}\cup\p_n)\right].
\end{split}\end{equation}
Now, $\E\left[h_{i,j,A}(\X_{i},\X_{i}\cup\p_n)\right]\le\E\left[g_{i,j,A}(\X_{i})
\right]$, where $g_{i,j,A}(\X_{i})$ is the indicator that the $i$ i.i.d. points $\X_i$ are connected (with respect to cut-off radius $r_n$) with $k$th Betti number of the complex they span equal to $j$ (ignoring any issues of connectedness to anything else).  By Lemma \ref{exp-order} $\E\left[g_{i,j,A}(\X_{i})
\right]\simeq r_n^{d(i-1)}\mu_{i,j,A}$.  Note moreover that the conditional probability that
$\X_{i}$ is isolated from $\p_n$ given that $\X_{i}$ is connected and has left-most vertex in $A$
is bounded below by the probability that there are no points of $\p_n$ in
the ball of radius $2(\ell_{i,j}+1)r_n$ about $X_1$, where $\ell_{i,j}$ is the largest number of edges that may be needed to move from one vertex to another in a simplicial complex on $i$ vertices with $k$th Betti number equal to $j$.  Since $\p_n$ is a Poisson 
process with intensity $nf(\cdot)$, this probability is given exactly by $e^{-n\vol_f(B_{2(\ell_{i,j}+1)r_n}(X_1)
)}\ge e^{-n\|f\|_\infty\theta_d(2(\ell_{i,j}+1)r_n)^d}.$  It thus follows that 
\begin{equation*}\begin{split}
 \E\left[g_{i,j,A}(\X_{i},\X_{i}\cup\p_n)\right]&\ge e^{-n\|f\|_\infty\theta_d(2(\ell_{i,j}+1)r_n)^d}
\E[g_{i,j,A}(\X_{i})]\\&\simeq e^{-n\|f\|_\infty\theta_d(2(\ell_{i,j}+1)r_n)^d}r_n^{d(i-1)}\mu_{i,j,A}.
\end{split}\end{equation*}
Recall that for  $i\ge k+2$ fixed, $j\le\binom{i}{k+1}$.  It thus follows that since $nr_n^d\to0$,

\begin{equation*}
\E\left[\sum_{j>0}jX_{i,j,A}^P\right]\simeq \frac{n^ir_n^{d(i-1)}}{i!}\sum_{j=1}^{\binom{i}{k+1}}j\mu_{i,j,A}, 
\end{equation*}
and that in particular, 
\[\E\tilde{\beta}_{k,A}^P\simeq \frac{n^{k+2}r_n^{d(k+1)}}{(k+2)!}\mu_{k+2,1,A}.\]

 A similar approach is taken to compute the variance:
\begin{equation*}\begin{split}
\E\left[(\tilde{\beta}_{k,A}^P)^2\right]&
=\E\left[\sum_{\Y\subseteq\p_n}\sum_{i,i'=k+2}^m\sum_{j,j'>0}jj' h_{i,j,A}(\Y,\p_n) h_{i',j',A}(\Y,\p_n)
\right]\\&\qquad\qquad
+\E\left[\sum_{\ell=0}^{m}\sum_{\substack{\Y,\Y'\subseteq\p_n\\\Y\neq \Y'}}\sum_{i,i'=k+2}^m\sum_{j,j'>0}jj'h_{i,j,A}(\Y,\p_n)
h_{i',j',A}(\Y',\p_n)\1_{\{|\Y\cap\Y'|=\ell\}}\right].
\end{split}\end{equation*}
For the first term, note that $h_{i,j,A}(\Y,\p_n) h_{i',j',A}(\Y,\p_n)=0$ unless $i=i'$ and $j=j'$, because $i$ is the number of vertices of $\Y$ and $j$ is the $k$th Betti number of the complex it spans.  This means the first term has in fact already been analyzed:
\begin{equation*}\begin{split}\E&\left[\sum_{\Y\subseteq\p_n}\sum_{i,i'=k+2}^m\sum_{j,j'>0}jj h_{i,j,A}(\Y,\p_n) h_{i',j',A}(\Y,\p_n)
\right]\\&\qquad=\E\left[\sum_{\Y\subseteq\p_n}\sum_{i=k+2}^m\sum_{j>0}j^2 h_{i,j,A}(\Y,\p_n)
\right]\simeq \frac{n^{k+2}r_n^{d(k+1)}}{(k+2)!}\mu_{k+2,1,A}.\end{split}\end{equation*}

For the second, observe first that the terms corresponding to $\ell\neq 0$ 
vanish:\\ $h_{i,j,A}(\Y,\p_n)h_{i',j',A}(\Y',\p_n)\equiv 0$ if $|\Y\cap\Y'|=\ell>0$,
because in that case neither $\Y$ nor $\Y'$ is a whole component.  
When $\ell=0$, applying Theorem \ref{product} yields
\begin{equation*}\begin{split}
\E&\left[\sum_{\Y,\Y'\subseteq\p_n}h_{i,j,A}(\Y,\p_n)h_{i',j',A}
(\Y',\p_n)\1_{\{\Y\cap\Y'=\emptyset\}}
\right]\\&\qquad\qquad
=\frac{n^{i+i'}}{i!i'!}\E\Big[jj'h_{i,j,A}(\X_i,\X_i\cup\X_{i'}\cup\p_n)
h_{i',j',A}(\X_{i'},\X_i\cup\X_{i'}\cup\p_n)\Big],
\end{split}\end{equation*} 
where again $\X_i$ and $\X_{i'}$ are independent collections of $i$ and $i'$ i.i.d. points distributed according to $f$, respectively.
Making use of \eqref{mean-unPoisson} thus yields
\begin{equation*}\begin{split}
\var&\left[\tilde{\beta}_{k,A}^P\right]\\&=\E\left[\tilde{\beta}_{k,A}^P\right]+\sum_{i,i'= k+2}^m\sum_{j,j'\ge 1}
\frac{n^{i+i'}jj'}{i!i'!}
\Bigg\{\E\left[h_{i,j,A}(\X_{i},\X_i\cup\X_{i'}\cup\p_n)
h_{i',j',A}(\X_{i'},\X_i\cup\X_{i'}\cup\p_n)\right]\\&\qquad\qquad\qquad\qquad\qquad\qquad\qquad\qquad\qquad\qquad-\E\left[
h_{i,j,A}(\X_{i},\X_i\cup\p_n)\right]\E\left[
h_{i',j',A}(\X_{i'},\X_{i'}\cup\p_n)\right]\Bigg\}.
\end{split}\end{equation*}
Now, let $\p_n'$ be an independent copy of $\p_n$. Then
\begin{equation*}\begin{split}
\E&\left[h_{i,j,A}(\X_i,\X_i\cup\X_{i'}\cup\p_n)
h_{i',j',A}(\X_{i'},\X_i\cup\X_{i'}\cup\p_n)\right]\\&\qquad-\E\left[
h_{i,j,A}(\X_{i},\X_i\cup\p_n)\right]\E\left[
h_{i',j',A}(\X_{i'},\X_{i'}\cup\p_n)\right]\\&=
\E\Bigg[h_{i,j,A}(\X_i,\X_i\cup\X_{i'}\cup\p_n)
h_{i',j',A}(\X_{i'},\X_i\cup\X_{i'}\cup\p_n)\\&\qquad\qquad-h_{i,j,A}(\X_{i},\X_i\cup\p_n) h_{i',j',A}(\X_{i'},\X_{i'}\cup\p_n')\Bigg]\\&=
\E\left[\left(h_{i,j,A}(\X_i,\X_i\cup\X_{i'}\cup\p_n)-h_{i,j,A}(\X_{i},\X_i\cup\p_n)\right) h_{i',j',A}(\X_{i'},\X_i\cup\X_{i'}\cup\p_n)\right]\\&\quad+
\E\left[h_{i,j,A}(\X_i,\X_i\cup\p_n)
\left(h_{i',j',A}(\X_{i'},\X_i\cup\X_{i'}\cup\p_n)-h_{i',j',A}(\X_{i'},\X_{i'}\cup\p_n)
\right)\right]\\&\quad+\E\left[h_{i,j,A}(\X_i,\X_i\cup\p_n)
\left(h_{i',j',A}(\X_{i'},\X_{i'}\cup\p_n)- h_{i',j',A}(\X_{i'},\X_{i'}\cup\p_n'
\right)\right]\\&=:E_1+E_2+E_3.
\end{split}\end{equation*}
Now, observe that in fact $E_1=0$: the difference is non-zero if and
only if $\X_i$ and $\X_{i'}$ are connected by an edge, in
which case the second factor is zero.

Observe that the difference in $E_2$ is either 0 or $-1$ .  Furthermore, 
it is non-zero if and only if $\X_{i}$ and $\X_{i'}$ are
connected by an edge, and both $\X_i$ and $\X_{i'}$
are connected.  This probability is bounded above by
\[\|f\|_\infty^{i+i'-1}\theta_d^{i+i'-1}(2\ell_{i,j}r_n)^{d(i-1)}(2\ell_{i',j'}r_n)^{d(i'-1)} (2(\ell_{i,j}+\ell_{i'+j'}+1)r_n)^d.\]

Finally, conditional on the event $\left[\cup_{x\in\X_i}B_{2\ell_{i,j}r_n}(x)\right]\cap 
\left[\cup_{x\in\X_{i'}}B_{2\ell_{i',j'}r_n}(x)\right]=\emptyset$, the 
two terms of $E_3$ have the same distribution by the spacial
independence property of the Poisson process.  A contribution from
$E_3$ therefore only arises if in particular $\X_i$ and $\X_{i'}$ are both connected and the intersection above is non-empty. The probability of this event is bounded
above by 
\[\|f\|_\infty^{i+i'-1}\theta_d^{i+i'-1}(2\ell_{i,j}r_n)^{d(i-1)}(2\ell_{i',j'}r_n)^{d(i'-1)} (4(\ell_{i,j}+\ell_{i',j'})r_n)^d.\]
It follows that
\[\var\left[\tilde{\beta}_{k,A}^P\right]= \E\left[\tilde{\beta}_{k,A}^P\right]+E,\]
and
\[
|E|\le\sum_{i,i'= k+2}^m\sum_{j,j'\ge 1}\frac{n^{i+i'}jj'(C_{i,j}r_n)^{d(i+i'-1)}}{i!i'!}2\|f\|_\infty^{i+i'-1}\theta_d^{i+i'-1}\le C(f,d,k)
(nr_n^d)^{k+2}(n^{k+2}r_n^{d(k+1)}),\]
where $C(f,k,d)$ is a constant depending on $f$, $d$, and $k$.  This
completes the proof of the first statement of the lemma.  The proof of the second statement is the same, just removing the terms of the sum indexed by $i>k+2$, and the third statement is gotten by removing the terms indexed by $i=k+2$. 
\end{proof}

The following was proved via Stein's method in \cite{KM}.
\begin{thm}\label{Poissonized_normal}
With notation as above, and for $n^{k+2}r_n^{d(k+1)}\to\infty$ and $nr_n^d\to0$,
$$\frac{S_k^P-\E\left[S_k^P\right]}{\sqrt{n^{k+2}r_n^{d(k+1)}}}
\Longrightarrow\mathcal{N}\left(0,\frac{\mu_{k+2,1}}{(k+2)!}\right).$$
\end{thm}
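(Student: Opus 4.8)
The plan is to realize $S_k^P$ as a sum of local, spatially dissociated indicators and to apply Stein's method in the form of a normal approximation bound for dissociated families—the continuum counterpart of Theorem~\ref{BKR-thm}—thereby reducing the problem to estimating a sum of third moments that can be evaluated with the Palm-type formulas of Theorems~\ref{one} and~\ref{product}. First I would write
\[
S_k^P=\sum_{\substack{\Y\subseteq\p_n\\|\Y|=k+2}}h(\Y,\p_n),
\]
where $h(\Y,\p_n)$ is the indicator that the $k+2$ points of $\Y$ form an isolated connected component of $\mathcal{C}(\p_n)$ whose \v{C}ech complex is the boundary of a $(k+1)$-simplex, so that the component contributes exactly $1$ to $\beta_k$. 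The structural point is that $h(\Y,\p_n)$ is determined by the points of $\p_n$ within distance $2r_n$ of $\Y$: the internal face conditions and the isolation condition are all read off there. Hence the normalized summands $X_\Y=\sigma^{-1}\bigl(h(\Y,\p_n)-\E h(\Y,\p_n)\bigr)$, with $\sigma^2=\var(S_k^P)$, form a dissociated family, and two of them are dependent only when the corresponding point sets lie within $O(r_n)$ of one another; the dependency neighborhood $L_\Y$ consists of the configurations $\Y'$ having a point within $O(r_n)$ of $\Y$.

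Second, the dissociated-family normal approximation bound (the continuum version of Theorem~\ref{BKR-thm}) gives
\[
d_1(W,Z)\le K\sum_{\Y}\sum_{\Y',\Y''\in L_\Y}\Big[\E|X_\Y X_{\Y'}X_{\Y''}|+\E|X_\Y X_{\Y'}|\,\E|X_{\Y''}|\Big],
\]
where $W=\sigma^{-1}(S_k^P-\E S_k^P)$. Bounding $|X_\Y|\le 2/\sigma$ reduces each summand to $\sigma^{-3}\E[\xi_\Y\xi_{\Y'}\xi_{\Y''}]$, with $\xi_\Y$ the unnormalized indicator, exactly as in the $\xi_A$ computation of Section~1. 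By Lemma~\ref{CC_Poisson_means}(2) we have $\sigma^2\sim\frac{\mu_{k+2,1}}{(k+2)!}\,n^{k+2}r_n^{d(k+1)}$, hence $\sigma\to\infty$ under the hypothesis $n^{k+2}r_n^{d(k+1)}\to\infty$. The diagonal $\Y=\Y'=\Y''$ dominates: since $\xi_\Y$ is an indicator, $\sum_\Y\E[\xi_\Y]=\E[S_k^P]\sim\sigma^2$ by Theorem~\ref{one}, so this block contributes $O(\sigma^{-3}\cdot\sigma^2)=O(\sigma^{-1})\to 0$.

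Third, I would show every genuinely off-diagonal block is of smaller order than the diagonal. The crucial observation, already exploited in the variance computation of Lemma~\ref{CC_Poisson_means}, is that two isolated components whose indicators are simultaneously nonzero must be separated by more than $r_n$, so a pair of distinct configurations lying in each other's dependency neighborhood is confined to a thin shell; applying Theorem~\ref{product} to such a pair or triple produces an extra factor of $nr_n^d\to 0$ relative to the diagonal—precisely the mechanism by which the error term $E$ in Lemma~\ref{CC_Poisson_means} was shown to be of smaller order than $\E[\tilde{\beta}_{k,A}^P]$. Consequently every off-diagonal block is $o(\sigma^{-1})$, and the whole bound is $O(\sigma^{-1})=O\bigl((n^{k+2}r_n^{d(k+1)})^{-1/2}\bigr)\to 0$, giving $W\Rightarrow\mathcal{N}(0,1)$; the stated normalization then follows from the variance asymptotics of Lemma~\ref{CC_Poisson_means}(2) together with Slutsky's theorem.

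The main obstacle is the bookkeeping for the off-diagonal third moments: one must verify that every way of overlapping or near-abutting two or three $(k+2)$-point configurations—subject to each being an isolated component—carries at least one surplus factor of $nr_n^d$, so that after summing against the $\sim n^{k+2}$ choices of location and the Palm weights of Theorem~\ref{product}, no block matches the diagonal order $\sigma^2$. This is the same phenomenon that forces $\var(S_k^P)$ to be asymptotic to $\E[S_k^P]$ rather than being inflated by correlations, and reusing the $E_1,E_2,E_3$ analysis from the proof of Lemma~\ref{CC_Poisson_means} is what makes the estimate close. A secondary technical point is the formal passage from the discrete dissociated bound of Theorem~\ref{BKR-thm} to the random, continuum index set of the Poisson process, which is handled throughout by the Palm calculus of Theorems~\ref{one} and~\ref{product}.
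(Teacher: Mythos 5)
This theorem is not reproved in the erratum --- it is imported from \cite{KM}, where it was established via Stein's method --- so your proposal must be judged against that proof. Your overall architecture matches it: write $S_k^P$ as a sum of local indicators, apply a Stein-type third-moment bound, and use the asymptotics $\var(S_k^P)\sim\E S_k^P\sim\frac{\mu_{k+2,1}}{(k+2)!}n^{k+2}r_n^{d(k+1)}$ from Lemma \ref{CC_Poisson_means}(2) to show the bound vanishes. Your moment bookkeeping is also sound in outline: the diagonal block contributes $\E S_k^P/\sigma^3\sim\sigma^{-1}\to0$, and a second isolated component forced into an $O(r_n)$-shell around the first costs an extra factor of $r_n^d$ in volume, which is exactly the mechanism that makes $\var(S_k^P)$ asymptotic to $\E S_k^P$ in the proof of Lemma \ref{CC_Poisson_means}.

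The genuine gap is the second step: the family $\{h(\Y,\p_n)\}$ is \emph{not} dissociated, so there is no ``continuum counterpart of \eqref{BKR-thm}'' that you can simply invoke. Because of the isolation requirement, $h(\Y,\p_n)$ depends on all of $\p_n$ within distance $O(r_n)$ of $\Y$; hence two configurations with \emph{disjoint} point sets can have dependent indicators, and whether they do is itself a random event (spatial proximity), not a combinatorial condition on shared index coordinates. The Barbour--Karo\'nski--Ruci\'nski bound requires a deterministic index set with independence forced by coordinate-disjointness; neither hypothesis holds here. (Contrast the Erd\H{o}s--Renyi section, where $\xi_A$ is a function of only the edge variables inside $A$, so dissociation is exact.) Palm calculus, Theorems \ref{one} and \ref{product}, computes the moments you need but does not supply a normal approximation inequality over a random index set, so labelling this a ``secondary technical point'' understates it: as written, the key inequality has no justification. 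The repair is standard and is the route effectively taken in \cite{KM} following Penrose: partition the support of $f$ into cubes of side $O(r_n)$, let $\eta_i$ be the contribution to $S_k^P$ from components whose left-most point lies in cube $i$, and apply a Stein's-method normal approximation for sums indexed by a dependency graph of bounded degree (only $O(1)$ cubes lie within distance $2r_n$ of a given cube). Your third-moment estimates then transfer to the cube-indexed sum essentially verbatim, and the conclusion follows with the stated normalization via Lemma \ref{CC_Poisson_means}(2).
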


This gives a central limit theorem for $\tilde{\beta}_k^P$ as follows.  Write 
\[\tilde{\beta}_k^P=S_k^P+R_k^P,\qquad R_k^P:=\sum_{\substack{i\ge k+3\\j\ge 1}}jX_{i,j}.\]
Fix $t\in\R$ and $\epsilon>0$.  By Lemma \ref{CC_Poisson_means}, for $n$ large enough, $\frac{\var(R_k^P)}{n^{k+2}r_n^{d(k+1)}}\le\epsilon^3$, so that for $n$ large enough,   
\begin{equation*}\begin{split}
\P\left[\frac{\tilde{\beta}_k^P-\E\tilde{\beta}_k^P}{\sqrt{n^{k+2}r_n^{d(k+1)}}}\le t\right]&\le\P\left[\frac{S_k^P-\E S_k^P}{\sqrt{n^{k+2}r_n^{d(k+1)}}}\le t+\epsilon\right]+\P\left[\left|\frac{R_k^P-\E R_k^P}{\sqrt{n^{k+2}r_n^{d(k+1)}}}\right|>\epsilon\right]\\
&\le \P\left[\frac{S_k^P-\E S_k^P}{\sqrt{n^{k+2}r_n^{d(k+1)}}}\le t+\epsilon\right]+\epsilon.
\end{split}\end{equation*}
By applying the central limit theorem already proved for $S_k^P$ and then letting $\epsilon\to0$, it follows that 
\[\limsup_{n\to\infty}\P\left[\frac{\tilde{\beta}_k^P-\E\tilde{\beta}_k^P}{\sqrt{n^{k+2}r_n^{d(k+1)}}}\le t\right]\le\P\left[\sqrt{\frac{\mu_{k+2,1}}{(k+2)!}}Z\le t\right].\]
The other inequality is proved in the same way, giving the following central limit theorem for $\tilde{\beta}_k^P$.

\begin{thm}\label{Poissonized_CLT}
For notation as above, 
\[\frac{1}{\sqrt{n^{k+2}r_n^{d(k+1)}}}\left(\tilde{\beta}_k^P-\E\tilde{\beta}_k^P\right)\Longrightarrow \mathcal{N}\left(0,\frac{\mu_{k+2,1}}{(k+2)!}\right).\]
\end{thm}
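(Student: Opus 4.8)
The plan is to build the central limit theorem for $\tilde{\beta}_k^P$ out of the one already available for $S_k^P$, using the decomposition $\tilde{\beta}_k^P = S_k^P + R_k^P$ with $R_k^P = \sum_{k+3\le i\le m,\,j\ge 1} j X_{i,j}$ collecting the contribution of components on at least $k+3$ vertices. The strategy is a Slutsky-type argument: the main term $S_k^P$ satisfies the desired limit by Theorem \ref{Poissonized_normal}, so it suffices to show that, after normalizing by $\sqrt{n^{k+2}r_n^{d(k+1)}}$, the recentered remainder $R_k^P-\E R_k^P$ is negligible in probability. The limiting variance constant $\mu_{k+2,1}/(k+2)!$ then comes entirely from $S_k^P$.

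First I would quantify the negligibility of the remainder. By Lemma \ref{CC_Poisson_means}(3), $\var(R_k^P)$ is of order $n^{k+3}r_n^{d(k+2)}$, so dividing by the normalization gives
\[
\frac{\var(R_k^P)}{n^{k+2}r_n^{d(k+1)}} = O\!\left(n r_n^d\right)\xrightarrow{n\to\infty}0,
\]
since we work in the regime $nr_n^d\to0$. Hence, given $\epsilon>0$, for all large $n$ this ratio is at most $\epsilon^3$, and Chebyshev's inequality yields
\[
\P\!\left[\left|\frac{R_k^P-\E R_k^P}{\sqrt{n^{k+2}r_n^{d(k+1)}}}\right|>\epsilon\right]\le\frac{\var(R_k^P)}{\epsilon^2\,n^{k+2}r_n^{d(k+1)}}\le\epsilon.
\]

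With this in hand I would assemble the probability sandwich exactly as in the passage preceding the statement: writing $\tilde{\beta}_k^P-\E\tilde{\beta}_k^P=(S_k^P-\E S_k^P)+(R_k^P-\E R_k^P)$ and splitting on the event above produces, for large $n$, the bound $\P[(\tilde{\beta}_k^P-\E\tilde{\beta}_k^P)/\sqrt{n^{k+2}r_n^{d(k+1)}}\le t]\le\P[(S_k^P-\E S_k^P)/\sqrt{n^{k+2}r_n^{d(k+1)}}\le t+\epsilon]+\epsilon$; applying Theorem \ref{Poissonized_normal} and letting $\epsilon\to0$ controls the $\limsup$ from above by $\P[\sqrt{\mu_{k+2,1}/(k+2)!}\,Z\le t]$, and the symmetric split gives the matching lower bound. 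I do not expect a genuine obstacle here, since the real work — the order estimate $\var(R_k^P)=O(n^{k+3}r_n^{d(k+2)})$ — is already supplied by Lemma \ref{CC_Poisson_means}(3). The one point worth emphasizing is that the remainder must be recentered by its \emph{own} mean: $\E R_k^P$ is itself of order $n^{k+3}r_n^{d(k+2)}$ and is \emph{not} small against the normalization, but it is absorbed automatically into $\E\tilde{\beta}_k^P=\E S_k^P+\E R_k^P$, so that only the variance of $R_k^P$ need vanish, and the decay $nr_n^d\to0$ is precisely what delivers it.
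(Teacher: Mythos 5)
Your proposal is correct and follows essentially the same route as the paper: the same decomposition $\tilde{\beta}_k^P=S_k^P+R_k^P$, the same appeal to Lemma \ref{CC_Poisson_means}(3) to get $\var(R_k^P)/\bigl(n^{k+2}r_n^{d(k+1)}\bigr)=O(nr_n^d)\to0$, and the same Chebyshev-plus-sandwich argument transferring the limit from $S_k^P$. Your closing remark about recentering $R_k^P$ by its own mean (rather than needing $\E R_k^P$ itself to be small) is a point the paper leaves implicit but is exactly right.
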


\medskip

The remaining work is to use this result to obtain the same 
result for $\tilde{\beta}_{k}$ itself. 
To do so, the following ``de-Poissonization result'' is used.
\begin{thm}[Theorem 2.12 of \cite{penrose}]\label{de-Poisson}
Suppose that for each $n\in\N$, $H_n(\X)$ is a real-valued functional on finite
sets $\X\subseteq\R^d$.  Suppose that for some $\sigma^2\ge 0$, 
\begin{enumerate}
\item $\ds\frac{1}{n}\var(H_n(\p_n))\longrightarrow\sigma^2,$
and
\item $\ds\frac{1}{\sqrt{n}}\big[H_n(\p_n)-\E H_n(\p_n)\big]
\Longrightarrow\sigma^2Z,$
for $Z$ a standard normal random variable.
\end{enumerate}
Suppose that there are constants $\alpha\in\R$ and $\gamma>\frac{1}{2}$ such
that the increments $R_{q,n}=H_n(\X_{q+1})-H_n(\X_q)$ satisfy
\begin{equation}\label{means}
\lim_{n\to\infty}\left(\sup_{n-n^\gamma\le q\le n+n^\gamma}|\E[R_{q,n}]-\alpha|
\right)=0,
\end{equation}
\begin{equation}\label{covs}
\lim_{n\to\infty}\left(\sup_{n-n^\gamma\le q<q'\le n+n^\gamma}|\E[R_{q,n}R_{q',n}]
-\alpha^2|\right)=0,
\end{equation}
and
\begin{equation}\label{vars}
\lim_{n\to\infty}\left(\frac{1}{\sqrt{n}}\sup_{n-n^\gamma\le q\le n+n^\gamma}
\E[R_{q,n}^2]\right)=0.
\end{equation}
Finally, assume that there is a constant $\tilde{\beta}>0$ such that, with probability
one, 
$$|H_n(\X_q)|\le\tilde{\beta}(n+q)^\beta.$$
Then $\alpha^2\le\sigma^2$ and as $n\to\infty$, $\frac{1}{n}\var(H_n(\X_n))
\to \sigma^2-\alpha^2$ and 
$$\frac{1}{\sqrt{n}}\big[H_n(\X_n)-\E H_n(\X_n)\big]\Longrightarrow
\sqrt{\sigma^2-\alpha^2}Z.$$

\end{thm}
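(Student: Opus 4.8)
The plan is to realize the Poissonized and de-Poissonized functionals on one probability space and to reduce their difference to a sum of the increments $R_{q,n}$. Let $X_1,X_2,\ldots$ be the underlying i.i.d.\ sequence with density $f$, write $\X_q=\{X_1,\ldots,X_q\}$, and let $N_n$ be a Poisson random variable with mean $n$ that is independent of the sequence, so that $\p_n$ may be identified with $\X_{N_n}$ and $H_n(\p_n)=H_n(\X_{N_n})$. The starting point is the telescoping identity
\[H_n(\p_n)-H_n(\X_n)=\sgn(N_n-n)\sum_{q}R_{q,n},\]
where $q$ ranges over the integers strictly between $n$ and $N_n$. Since $H_n(\X_n)$ depends only on the first $n$ points while $N_n$ is independent of the whole sequence, $H_n(\X_n)$ and $N_n$ are independent; I would use this fact repeatedly.

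The first and most delicate step is to show that, after centering, $H_n(\p_n)-H_n(\X_n)$ is well approximated by $\alpha(N_n-n)$, in the sense that $\tfrac1n\E\big[(H_n(\p_n)-H_n(\X_n)-\alpha(N_n-n))^2\big]\to0$. Because $\var(N_n)=n$ and $\gamma>\tfrac12$, Chebyshev's inequality gives $\P[\,|N_n-n|>n^\gamma\,]\to0$, and on the complementary event the polynomial growth bound $|H_n(\X_q)|\le\tilde{\beta}(n+q)^\beta$ together with the Poisson tail controls the contribution of the difference. On the event $\{|N_n-n|\le n^\gamma\}$ I would condition on $N_n$, so that the number of increments is fixed, and estimate the conditional second moment of $\sum_q(R_{q,n}-\alpha)$ by splitting into diagonal terms controlled by \eqref{vars} and off-diagonal terms $\E[R_{q,n}R_{q',n}]-\alpha^2$ made uniformly small over the window by \eqref{covs}, with the centering supplied by \eqref{means}. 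Integrating back over $N_n$ gives
\[\frac{1}{\sqrt n}\Big(H_n(\p_n)-H_n(\X_n)-\alpha(N_n-n)\Big)\xrightarrow{L^2}0.\]

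With this approximation in hand, both conclusions follow. For the variance, independence of $H_n(\X_n)$ and $N_n$ gives $\var(H_n(\p_n))=\var(H_n(\X_n))+\alpha^2 n+o(n)$; dividing by $n$ and invoking hypothesis (i) yields $\tfrac1n\var(H_n(\X_n))\to\sigma^2-\alpha^2$, and nonnegativity of variance forces $\alpha^2\le\sigma^2$. For the central limit theorem, the $L^2$ bound shows that $\tfrac{1}{\sqrt n}[H_n(\p_n)-\E H_n(\p_n)]$ has the same weak limit as $\tfrac{1}{\sqrt n}\big([H_n(\X_n)-\E H_n(\X_n)]+\alpha(N_n-n)\big)$. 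Since $H_n(\X_n)$ and $N_n$ are independent and $\tfrac{1}{\sqrt n}\alpha(N_n-n)\Rightarrow\mathcal{N}(0,\alpha^2)$ by the Poisson central limit theorem, the characteristic function of the Poissonized quantity factors asymptotically as the product of the characteristic function of $\tfrac{1}{\sqrt n}[H_n(\X_n)-\E H_n(\X_n)]$ and that of an $\mathcal{N}(0,\alpha^2)$ limit. Comparing with hypothesis (ii), which identifies the Poissonized limit as $\mathcal{N}(0,\sigma^2)$, forces $\tfrac{1}{\sqrt n}[H_n(\X_n)-\E H_n(\X_n)]\Rightarrow\mathcal{N}(0,\sigma^2-\alpha^2)$, as claimed.

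The main obstacle is the $L^2$ approximation of the increment sum by $\alpha(N_n-n)$: the number of summands is random and coupled to $N_n$, which is exactly why the hypotheses \eqref{means}--\eqref{vars} are imposed uniformly over a window of width $2n^\gamma$. Conditioning on $N_n$ confined to that window is what lets the diagonal and cross contributions be bounded uniformly, and balancing the number of off-diagonal pairs against the uniform smallness in \eqref{covs} is the quantitative heart of the estimate; controlling the rare event $\{|N_n-n|>n^\gamma\}$ via the polynomial growth bound is the second place where care is needed. Once the $L^2$ estimate is in place, upgrading the combination of $L^2$-closeness and the independence of $H_n(\X_n)$ and $N_n$ to the asymptotic factorization of characteristic functions is a routine Slutsky-type argument.
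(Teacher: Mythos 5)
This theorem is quoted verbatim from Penrose's book and the paper offers no proof of it, so there is no internal argument to compare against; your sketch is in fact a reconstruction of Penrose's own proof (couple $\p_n=\X_{N_n}$, telescope $H_n(\p_n)-H_n(\X_n)$ into increments, show the centered difference is $L^2$-close to $\alpha(N_n-n)$, then use the independence of $H_n(\X_n)$ and $N_n$ to divide out the $\mathcal{N}(0,\alpha^2)$ factor at the level of characteristic functions), and the outline is correct. Two quantitative points need care when you fill in the $L^2$ estimate. First, for the diagonal terms you cannot bound the number of increments by the worst case $n^\gamma$: since \eqref{vars} only gives $\sup_q\E[R_{q,n}^2]=o(\sqrt n)$, the crude bound produces $n^\gamma\cdot o(\sqrt n)$, which is \emph{not} $o(n)$ when $\gamma>\tfrac12$; you must instead integrate over $N_n$ using $\E|N_n-n|=O(\sqrt n)$ for the diagonal count and $\E[(N_n-n)^2]=n$ for the off-diagonal pairs, each of which is $o(1)$ uniformly by \eqref{covs} and \eqref{means} --- this is exactly the ``balancing'' you flag, and it is where the proof lives. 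Second, on the event $\{|N_n-n|>n^\gamma\}$ the Chebyshev bound $n^{1-2\gamma}$ is in general too weak against the polynomial growth $(n+N_n)^\beta$; the superpolynomial Poisson tail bound, which you allude to, is genuinely needed there. With those two points made precise, the argument is complete and agrees with the cited source.
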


In conjunction with Theorem \ref{Poissonized_normal}, this yields Theorem \ref{clt-tilde}, as follows.

\begin{proof}[Proof of Theorem \ref{clt-tilde}]
Theorem \ref{de-Poisson} is applied to the functional  
$$H_n(\X):=\frac{1}{\sqrt{(nr_n^d)^{k+1}}}\sum_{\Y\subseteq\X}\left(\sum_{i= k+2}^m\sum_{j\ge 1}jh_{i,j}
(\Y,\X)\right);$$
$\sigma^2=\frac{\mu_{k+2,1}}{(k+2)!}$ and the central limit theorem
holds for $H_n(\p_n)$ by Theorem \ref{Poissonized_CLT}.  

Let 
\[D_{q,n}:=\sum_{\Y\subseteq\X_{q+1}}\left(\sum_{i= k+2}^m\sum_{j\ge 1}jh_{i,j}(\Y,\X_{q+1})\right)-\sum_{\Y\subseteq\X_q}\left(\sum_{i= k+2}^m\sum_{j\ge 1}j
h_{i,j}(\Y,\X_q)\right)\] 
(the dependence on $n$ is only through the threshhold radius $r_n$), and observe that $D_{q,n}$
 is the $k$th Betti number of the component of $X_{q+1}$ in 
$\X_{q+1}$, minus the $k$th Betti number of the complex that results by taking the component of $X_{q+1}$ and removing $X_{q+1}$ from it, assuming these components are on $m$ or fewer vertices.  
It follows that the difference is bounded by $\binom{m}{k+1}$, and is only non-zero if $X_{q+1}$ is connected to at least $k+1$ other vertices, so that
\[\left|\E[D_{q,n}]\right|\le \binom{m}{k+1}\binom{n+n^\gamma}{k+1}\left(\|f\|_\infty r_n^{d}\right)^{(k+1)}\le \binom{m}{k+1}\left((n+n^\gamma) \|f\|_\infty r_n^{d}\right)^{(k+1)}.\]
The first condition of the theorem is then satisfied with $\alpha=0$, for any $\gamma\in\left(\frac{1}{2},1\right]$.

Next, consider the quantity $\E[D_{q,n}D_{q',n}]$ for $q< q'$.
By the observation above, $D_{q,n}D_{q',n}$ is uniformly bounded by $\binom{m}{k+1}^2$.  The probability that $D_{q,n}\neq 0$ is bounded above by $c[n+n^\gamma]^{k+1}r_n^{d(k+1)}$ as before.  Given that that difference is non-zero, the largest probability event that causes $D_{q',n}$ to be non-zero is that $X_{q'+1}$ is connected to the component of $X_{q+1}$, and that its removal changes the Betti number of that component.  The probability that $X_{q'+1}$ is in the component of $X_{q+1}$ is bounded above $cr_n^d$, so that 
\[|\E[R_{q,n}R_{q',n}]|=\frac{1}{(nr_n^d)^{k+1}}|\E[D_{q,n}D_{q',n}]|\le\frac{1}{(nr_n^d)^{k+1}}\left(c[n+n^\gamma]^{k+1}r_n^{d(k+2)}\right)\le cr_n^d,\]
so that the second condition of the theorem is also satisfied.  

If $q=q'$, then we have as above
\[\E[D_{q,n}^2]\le c[n+n^\gamma]^{k+1}r_n^{d(k+1)},\]
so that 
\[\frac{1}{\sqrt{n}}\E[R_{q,n}^2]\le\frac{2^{k+1}c}{\sqrt{n}},\]
and so the third condition is satisfied as well.

Finally, the polynomial boundedness condition of Theorem \ref{de-Poisson}
is satisfied trivially, and the proof is complete.

\end{proof}

\bibliographystyle{plain}
\bibliography{empty}

\end{document}